\documentclass[12pt]{amsart}
\usepackage{degenerations}


\title{Degenerations of Complex Dynamical Systems II: Analytic and Algebraic Stability}

\author{Laura DeMarco and Xander Faber, with an appendix by Jan Kiwi}
%


\subjclass[2010]{37F10; 37P50 (primary);
 37F45 (secondary)}
 
\keywords{rational map; dynamics; measure of maximal entropy; algebraic stability; Berkovich space}

\begin{document}
\begin{abstract}
We study pairs $(f, \Gamma)$ consisting of a non-Archimedean rational function $f$ and a finite set of vertices $\Gamma$ in the Berkovich projective line, under a certain stability hypothesis.  We prove that stability can always be attained by enlarging the vertex set $\Gamma$.  As a byproduct, we deduce that meromorphic maps preserving the fibers of a rationally-fibered complex surface are algebraically stable after a proper modification.  The first article in this series examined the limit of the equilibrium measures for a degenerating $1$-parameter family of rational functions on the Riemann sphere.  Here we construct a convergent countable-state Markov chain that computes the limit measure.  

A classification of the periodic Fatou components for non-Archimedean rational functions, due to Rivera-Letelier, plays a key role in the proofs of our main theorems.  The appendix contains a proof of this classification for all tame rational functions.
\end{abstract}

\date{\today}

\maketitle


\section{Introduction}

In the preceding article \cite{DeMarco_Faber_Degenerations_2014}, we studied the dynamics in a degenerating 1-parameter family $f_t$ of complex rational functions by analyzing an associated rational function $f$ on the Berkovich projective line $\Berk$.  Our main tool was a quantization technique: the family $f_t$ gave rise to a sequence of partitions of $\Berk$, and the dynamics of $f$ relative to these partitions provided information on the limit of the measures of maximal entropy for $f_t$ at the degenerate parameter.   In the present article, we refine this quantization technique by constructing a discrete dynamical system that allows us to compute the limiting measures.  Though written as a continuation of \cite{DeMarco_Faber_Degenerations_2014}, with the theorems named accordingly, this article may be read independently.  

Let $k$ be an algebraically closed field that is complete with respect to a nontrivial non-Archimedean valuation.  Let $f$ be a rational function of degree $d \geq 2$ with coefficients in $k$.  Recall that $f$ determines a unique probability measure $\mu_f$ on the Berkovich space $\Berk_k$, characterized by its invariance under $\frac{1}{d} f^*$ while not charging exceptional points for $f$ \cite{Favre_Rivera-Letelier_Ergodic_2010}; we refer to $\mu_f$ as the equilibrium measure for $f$.
  
Let $\Gamma$ be any finite set of type~II points in $\Berk_k$.  
In this article, we introduce a notion of analytic stability for pairs $(f, \Gamma)$; see Definition~\ref{Def: Stable}.  Assuming stability, we show that the equilibrium measure for $f$, as ``seen" from $\Gamma$, is the stationary distribution for an explicit countable-state Markov chain.   (See, e.g., \cite{KSK-Markov_Chains} for definitions and relevant background for this type of random process.) The analytic stability hypothesis parallels the notion of algebraic stability for dynamics on complex surfaces; we obtain a corollary on the existence of algebraically stable modifications for meromorphic maps preserving the fibers of a rational fibration.  

To state our results precisely, we define a partition of $\Berk_k$ consisting of the elements of $\Gamma$ and the connected components of the complement $\Berk_k \smallsetminus \Gamma$.  Analytic stability implies that there is an explicit countable subset $\JJ$ of the partition that contains the Julia set for $f$ and satisfies the following property: For any pair $U, V \in \JJ$, the number of pre-images $\#\left(f^{-1}(y) \cap V \right)$ is independent of $y \in U$, when counted with multiplicities. To each pair of subsets $U,V \in \JJ$, we define a quantity $P_{U,V} \in [0,1]$ by 
	\[
		P_{U,V} = \frac{\#\left(f^{-1}(y) \cap V \right)}{d} \qquad (y \in U).
	\]
Let $P$ be the $|\JJ| \times |\JJ|$ matrix with $(U,V)$-entry $P_{U,V}$. Recall that a stationary probability vector for $P$ is a row vector $\nu: \JJ \to [0,1]$ whose entries sum to~1 and which satisfies $\nu P = \nu$. 


\begin{thmC}
	Let $k$, $f$, $\Gamma$ be as above. Suppose that the pair $(f, \Gamma)$ is analytically stable and that the Julia set for $f$ is not contained in the finite set $\Gamma$. Then $P$ is the transition matrix for a countable state Markov chain with a unique stationary probability vector $\nu: \JJ \to [0,1]$. The rows of $P^n$ converge pointwise to $\nu$, and the $U$-entry of $\nu$ satisfies $\nu(U) = \mu_f(U)$ for each $U \in \JJ$, where $\mu_f$ is the equilibrium measure for~$f$. 
\end{thmC}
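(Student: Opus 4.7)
The plan is to (i) verify that $P$ is a row-stochastic matrix, (ii) exhibit $\nu(U) := \mu_f(U)$ as a stationary probability vector, (iii) establish the pointwise convergence of $P^n$ via a direct counting formula combined with non-Archimedean equidistribution, and (iv) deduce uniqueness from (iii).

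For (i), fix $U \in \JJ$ and any $y \in U$; counted with multiplicity, $\sum_W \#(f^{-1}(y) \cap W) = d$ as $W$ runs over the parts of the partition of $\Berk_k$ induced by $\Gamma$, and analytic stability is arranged precisely so that every preimage of $y$ lies in some cell of $\JJ$, whence $\sum_{V \in \JJ} P_{U,V} = 1$. For (ii), I would use two facts: the equilibrium measure of a degree $d \geq 2$ map has no atoms at type~II points, so $\mu_f(\Gamma) = 0$ and $\sum_{U \in \JJ} \mu_f(U) = 1$; and $f^*\mu_f = d\,\mu_f$. Writing
\[
d\,\mu_f(V) \;=\; \int_{\Berk_k} \#\bigl(f^{-1}(y) \cap V\bigr)\, d\mu_f(y)
\]
and splitting the integral over the cells of $\JJ$, where the integrand is the constant $d\,P_{U,V}$ on each $U$, yields $\mu_f(V) = \sum_{U} \mu_f(U)\, P_{U,V}$.

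The crux is step (iii). A short induction on $n$, using the constancy of preimage counts on cells of $\JJ$, produces the explicit formula
\[
(P^n)_{U,V} \;=\; \frac{\#\bigl((f^n)^{-1}(y) \cap V\bigr)}{d^n} \qquad (y \in U).
\]
I would then invoke the Favre--Rivera-Letelier / Baker--Rumely equidistribution theorem for rational maps on the Berkovich line: for any $y$ outside the finite exceptional set, $d^{-n} \sum_{x \in (f^n)^{-1}(y)} \delta_x \to \mu_f$ weakly on $\Berk_k$. For each $U \in \JJ$ one can choose such a non-exceptional $y \in U$, since the exceptional set is finite and disjoint from the Julia set, while the hypothesis $J(f) \not\subseteq \Gamma$ guarantees that $\JJ$-cells meeting $J(f)$ exist. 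For $V$ a connected component of $\Berk_k \setminus \Gamma$, the boundary $\partial V \subseteq \Gamma$ has $\mu_f$-measure zero, so the portmanteau theorem upgrades weak convergence to $(P^n)_{U,V} \to \mu_f(V) = \nu(V)$.

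Step (iv) is then essentially automatic: any stationary probability vector $\nu'$ satisfies $\nu'(V) = \sum_{U} \nu'(U)(P^n)_{U,V}$ for every $n$, and since $(P^n)_{U,V} \leq 1$ and $\sum_U \nu'(U) = 1$, dominated convergence passes the limit inside the sum to give $\nu'(V) = \nu(V)$. The step I expect to require the most care is the precise treatment of $\JJ$: verifying that (a) preimages under $f$ of any point in a $\JJ$-cell remain in $\JJ$, which is exactly the content of analytic stability, and (b) every $\JJ$-cell contains a non-exceptional point where the equidistribution theorem applies, with particular attention to $\JJ$-cells that are singleton type~II points of $\Gamma$; both subtleties should reduce to the atomlessness of $\mu_f$ on type~II points together with the iterative use of analytic stability.
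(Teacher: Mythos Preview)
Your outline follows the paper's proof essentially step for step: the paper also proceeds by (1) row-stochasticity via Lemma~\ref{Lem: Consistent Multiplicities}, (2) stationarity of $\omega_f(U) = \mu_f(U)$ from $f^*\mu_f = d\mu_f$, (3) the inductive identity $(P^n)_{U,V} = d^{-n}\#\bigl((f^n)^{-1}(y)\cap V\bigr)$ (Lemma~\ref{Lem: Matrix multiplication}), (4)/(5) convergence of rows via Favre--Rivera-Letelier equidistribution and a portmanteau argument on $\partial V \subset \Gamma$, and (6) uniqueness by dominated convergence.

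There is one genuine error in your justification. You assert that ``the equilibrium measure of a degree $d\ge 2$ map has no atoms at type~II points'', and you lean on this both for $\mu_f(\Gamma)=0$ in step~(ii) and for the boundary condition in step~(iii). This is false: if $f$ has simple (potential good) reduction then $\mu_f = \delta_\zeta$ for a single type~II point~$\zeta$, which is then the entire Julia set. The correct argument --- and this is precisely where the hypothesis $\julia(f)\not\subset\Gamma$ enters --- is that $\mu_f$ can charge a point only in the simple-reduction case, and then that unique point $\zeta$ equals $\julia(f)$, which by hypothesis is not in $\Gamma$; hence $\mu_f(\Gamma)=0$. You invoke the hypothesis later, to guarantee that $\JJ$-cells meeting $\julia(f)$ exist, but that is not where it is needed. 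Once $\mu_f(\Gamma)=0$ is established correctly, your portmanteau step goes through for every $V\in\JJ$ (including singletons $V=\{\zeta\}\subset\Gamma$, since then $\partial V = V$ has $\mu_f$-measure zero). Your worry about non-exceptional base points is harmless: exceptional points are classical (type~I), so vertices are automatically non-exceptional, and $J$-domains are infinite open sets.
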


\noindent
We provide example computations of $P$ and the stationary measure $\nu$ in Section \ref{Sec: Examples}, and we explain how Theorem~C may be viewed as a generalization of the methods in \cite{DeMarco_Boundary_Maps_2005} for degenerating families of complex rational maps.   

\begin{remark}
	The hypothesis that the Julia set is not contained in $\Gamma$ is necessary; see Example~\ref{Ex: Simple reduction}. However, it fails if and only if there is a totally $f$-invariant point $\zeta \in \Gamma$, in which case $\mu_f$ is supported at $\zeta$. This condition is easily verified in practice. 
\end{remark}

\begin{remark}  
The transition matrix $P$ is a discretized version of the pullback operator $\frac{1}{d}f^*$ acting on probability measures on $\Berk$.   
\end{remark}

	
The convergence of the Markov chain $P$ is essentially a combinatorial reformulation of the equidistribution result of Favre and Rivera-Letelier on iterated pre-images \cite{Favre_Rivera-Letelier_Ergodic_2010}.  It is remarkable that such a combinatorial description of the dynamics of $f$ exists at all, and the analytic stability hypothesis is critical in this respect.  However, it is not a major restriction for many applications. We show that any vertex set $\Gamma$ may be augmented to be analytically stable, under a suitable hypothesis on the field of definition of $f$. 
	
\begin{thmD}
	Let $\ell$ be a discretely valued field with residue characteristic zero, and let $f \in \ell(z)$ be a rational function of degree $d \geq 2$. Let $k$ be a minimal complete and algebraically closed non-Archimedean extension of $\ell$. For any vertex set $\Gamma$ in $\Berk_k$, there exists a vertex set $\Gamma'$ containing $\Gamma$ such that $(f, \Gamma')$ is analytically stable. Moreover, if every element of $\Gamma$ is $\ell$-split (Definition~\ref{Def: ell-split}), then one may take $\Gamma'$ to have the same property. 
\end{thmD}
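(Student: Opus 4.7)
The plan is to iteratively enlarge $\Gamma$ by adjoining type~II vertices that resolve local failures of analytic stability, and then to prove the procedure terminates after finitely many steps. Parsing Definition~\ref{Def: Stable} together with the flatness criterion highlighted just before Theorem~C---that $\#(f^{-1}(y) \cap V)$ be independent of $y \in U$ for the relevant partition elements $U, V$---a pair $(f,\Gamma)$ should fail to be analytically stable precisely when some component $U$ of $\Berk_k \setminus \Gamma$ has preimage that ``partially covers'' another component $V \in \JJ$. Such partial coverings are governed by the branch points of $f$ and by the forward images $f(v)$ for $v \in \Gamma$, of which there are only finitely many for a given~$\Gamma$.

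To remove the obstructions at a given stage, I would adjoin to $\Gamma$ the images $f(v)$ for $v \in \Gamma$ together with the finitely many type~II points needed to separate branch loci, so that the preimage of each relevant component decomposes as a disjoint union of full partition components. The Riemann--Hurwitz bound of $2d-2$ on the total critical multiplicity, combined with the finiteness of $\Gamma$, controls how many new vertices appear at each step. Call the enlargement $\Gamma_1 \supseteq \Gamma$ and iterate to obtain a chain $\Gamma = \Gamma_0 \subseteq \Gamma_1 \subseteq \Gamma_2 \subseteq \cdots$.

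The \emph{main obstacle} is termination: newly adjoined vertices can themselves create fresh obstructions under further iteration of $f$. Here the hypothesis that $\ell$ is discretely valued with residue characteristic zero is crucial. Residue characteristic zero forces tame ramification of $f$, and minimality of $k$ over $\ell$ should imply that the tree of $\ell$-split type~II points in $\Berk_k$ is locally finite and $f$-invariant. I would work on the finite convex subtree generated by $\Gamma$, the critical set, and a bounded range of forward iterates, and identify a monovariant---for instance the sum over unresolved obstruction edges of a local ramification index, or the number of edges of $\Gamma_n$ whose image under $f$ fails to be a union of edges---that strictly decreases at each resolution step. Bounding this monovariant in terms of $\#\Gamma$ and $d$ would force the chain to stabilize at some $\Gamma' = \Gamma_N$.

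Finally, the $\ell$-split conclusion should follow automatically from the construction: $f \in \ell(z)$ sends $\ell$-split type~II vertices to $\ell$-split type~II vertices, the critical points of $f$ are defined over $\sep{\ell}$, and the separating vertices produced at each resolution step are branch points of the convex hull of $\ell$-split vertices and hence $\ell$-split themselves. Consequently, if $\Gamma_0$ is $\ell$-split then so is every $\Gamma_n$, and therefore so is the terminal $\Gamma'$.
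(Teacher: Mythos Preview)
Your proposal has a genuine gap at the termination step, and it stems from a misreading of what analytic stability demands. Definition~\ref{Def: Stable} is a condition on the \emph{vertices}: $(f,\Gamma)$ fails to be analytically stable exactly when some $\zeta\in\Gamma$ satisfies $f(\zeta)\in U$ for a $J$-domain $U$. The constancy of $\#(f^{-1}(y)\cap V)$ is a \emph{consequence} of stability (Lemma~\ref{Lem: Multiplicity Definition}), not an equivalent formulation, so recasting the obstruction as ``partial covering'' governed by branch points points you in the wrong direction. In particular, suppose $\zeta\in\Gamma$ lies in the Julia set. Then $f(\zeta)$ is also in the Julia set, and by Proposition~\ref{Prop: Fatou F-ensemble} any $\Gamma'$-domain containing $f(\zeta)$ is a $J$-domain. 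Hence stability forces $f(\zeta)\in\Gamma'$, and inductively the entire forward orbit of $\zeta$ must lie in $\Gamma'$. If that orbit were infinite, \emph{no} finite enlargement could work, regardless of how cleverly one separates ramification. A ramification-based monovariant cannot detect this: Example~\ref{Ex: Tent map} exhibits a tame (Latt\`es) map with a type~II Julia point of infinite orbit, for which Theorem~D genuinely fails over the larger field.

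The paper's proof therefore rests on ingredients your outline omits entirely. For Julia vertices one needs Proposition~\ref{Prop: Preperiodic Julia points}: every type~II Julia point is preperiodic. This is where the hypotheses are actually used---it is deduced from Benedetto's ``No Wandering Domains'' theorem (Proposition~\ref{Prop: Benedetto Wandering}), which requires $f$ to be defined over a discretely valued field, together with a base-change trick. For Fatou vertices one invokes Rivera-Letelier's classification (Proposition~\ref{Prop: Classification}): a vertex in the Fatou set eventually lands in an attracting basin, a Rivera domain, or a wandering component, and each case is handled by a separate argument (Steps~2--4 of \S\ref{Sec: Stable proof}), with Lemma~\ref{Lem: Rivera periodic locus} controlling the Rivera case. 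Your invocation of residue characteristic zero for ``tame ramification and local finiteness of the tree'' is not where the hypothesis bites; it enters through Benedetto's theorem and through the absence of wild $p$-power phenomena in the Rivera-domain analysis. Likewise, the $\ell$-splitness of the augmented vertex set is not automatic from convex hulls---the paper verifies it case by case, for instance by showing that attracting periodic points lie in $\PP^1(\hat\ell)$ and that boundary points of the relevant wandering disks are $\ell$-split.
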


\begin{remark}
\label{Rem: Tame extension}
Following Trucco \cite{Trucco_Tame_Polynomials_2014}, we say that a rational function $f$ is \textbf{tame} if its ramification locus is contained in the connected hull of the (type~I) critical points. For example, if the residue characteristic of $k$ is zero, or if the residue characteristic is $p > \deg(f)$, then $f$ is tame. We expect that Theorem~D continues to hold without the residue characteristic zero hypothesis, provided that we assume $f$ is tame. 
\end{remark}

A key ingredient in the proof of Theorem~D is a classification of periodic Fatou components for Berkovich dynamical systems, due to Rivera-Letelier \cite{Rivera-Letelier_Asterisque_2003}.  The original proof is for rational functions over $\CC_p$; Jan Kiwi has written a proof for tame rational functions that we include as Appendix~\ref{Sec: Appendix}.  The proof also uses a ``No Wandering Domains'' result of Benedetto \cite[Thm.~5.1]{Benedetto_wandering_nontriv_reduction}, from which we deduce that all Type II points in the Julia set are preperiodic (under the hypotheses of Theorem~D); see Proposition~\ref{Prop: Preperiodic Julia points}. The general strategy of the proof should carry over to $p$-adic fields, but the corresponding result on wandering domains is not known in full generality in residue characteristic~$p$ \cite{Benedetto_wild_recurrent_critical}. 

	
	As an additional application of the previous theorems, we look at the question of when a meromorphic map $F: X \dashrightarrow X$ of a complex surface can be resolved to be algebraically stable by a proper modification of $X$. Recall that $F$ is \textbf{algebraically stable} if there does not exist a curve $C$ such that $F^n(C)$ is collapsed to an indeterminacy point of $F$ for some $n \geq 1$.  The maps we consider preserve a rational fibration, and so locally take the form 
		$$(t,x) \mapsto (t, f_t(x)),$$
where $f_t$ is a meromorphic family of (complex) rational functions, with $t$ in the unit disk $\D$.  Our Theorem~D implies the following statement, in which $\hat \CC$ denotes the Riemann sphere. 

\begin{thmE}
	Let $f_t: \hat \CC \to \hat \CC$ be a meromorphic family of rational functions for $t \in \DD$ with $\deg(f_t) = d \geq 2$ for $t \neq 0$. Let $\pi: X \to \DD$ be a normal connected surface with projective fibers such that $\pi^{-1}(\DD^*) \cong \DD^* \times \hat \CC$ and such that each irreducible component of $X_0 = \pi^{-1}\{0\}$ has multiplicity~1. Consider the rational map $F: X \dashrightarrow X$ defined by $(t,z) \mapsto (t,f_t(z))$. There exists a proper modification $Y \to X$ that restricts to an isomorphism over $\DD^*$ such that the induced rational map $F_Y : Y \dashrightarrow Y$ is algebraically stable. 
\end{thmE}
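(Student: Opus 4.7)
The plan is to interpret the geometric setup through non-Archimedean dynamics and reduce the statement to Theorem~D. After choosing a local parameter on $\DD$ near the origin, I would view the family $f_t$ as a rational function $f \in \ell(z)$, where $\ell = \CC(\!(t)\!)$ is discretely valued with residue characteristic zero. Let $k$ be a minimal complete algebraically closed non-Archimedean extension of $\ell$, so that $f \in k(z)$ has degree $d$ and Theorem~D is available.

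Next I would invoke the standard dictionary between proper normal models of $\PP^1_\ell$ with reduced central fiber and finite $\ell$-split vertex sets of type~II points in $\Berk_k$: the irreducible components of $X_0$ correspond bijectively to the points of a finite vertex set $\Gamma \subset \Berk_k$, with the multiplicity-one hypothesis equivalent to $\ell$-splitness of $\Gamma$. Apply Theorem~D to obtain an $\ell$-split enlargement $\Gamma' \supset \Gamma$ such that $(f, \Gamma')$ is analytically stable. Each point of $\Gamma' \smallsetminus \Gamma$ represents a blowup center defined over $\ell$; the corresponding iterated blowups assemble into a proper modification $\rho \colon Y \to X$ which restricts to an isomorphism over $\DD^*$ and whose central fiber has irreducible components in bijection with $\Gamma'$.

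The main obstacle is the final step: translating analytic stability of $(f, \Gamma')$ into algebraic stability of the induced map $F_Y \colon Y \dashrightarrow Y$. Because $F_Y$ preserves the fibration $\pi \circ \rho$, the only curves that some iterate $F_Y^n$ can collapse to a point are the irreducible components $C_\xi \subset Y_0$, indexed by $\xi \in \Gamma'$. A translation into Berkovich dynamics shows that $F_Y^n(C_\xi)$ is a curve precisely when $f^n(\xi) \in \Gamma'$, and otherwise is a single point on the unique component $C_{\xi'}$ whose adjacent open partition element contains $f^n(\xi)$. The content of analytic stability is designed so that this landing point is never an indeterminacy point of $F_Y$: indeterminacy at a smooth point of $C_{\xi'}$ corresponds in Berkovich terms to a tangent direction at $\xi'$ along which $f$ contracts partition elements incompatibly with $\Gamma'$, and stability rules this out. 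Carefully spelling out this dictionary, using the structure of the partition $\JJ$ introduced before Theorem~C, completes the argument; the remainder is essentially bookkeeping.
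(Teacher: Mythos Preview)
Your approach is correct and matches the paper's proof closely: pass to the non-Archimedean picture, use the bijection between $\ell$-split vertex sets and normal models of $\PP^1_\ell$ with reduced central fiber (the paper's Proposition~\ref{Prop: Vertex sets and models}), apply Theorem~D to enlarge $\Gamma$ to an analytically stable $\Gamma'$, and take $Y$ to be the model corresponding to $\Gamma'$.

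The one place your outline is imprecise is the final translation step, which is not quite ``bookkeeping'' and which the paper isolates as Lemma~\ref{Lem: Dynamic compatibility}. Your description in terms of ``a tangent direction at $\xi'$ along which $f$ contracts partition elements incompatibly'' and your appeal to $\JJ$ are slightly off. The clean statement is this: for a closed point $y \in Y_0$, the formal fiber $U_y = \red_Y^{-1}(y)$ is a $\Gamma'$-domain (not necessarily a disk, so $y$ need not lie on a unique component), and $y$ is an indeterminacy point of $F_Y$ if and only if $f(U_y)$ meets $\Gamma'$. Now if $F_Y^n$ collapses a component $C$ to an indeterminacy point $y$, let $\zeta \in \Gamma'$ be the vertex over the generic point of $C$; then $f^n(\zeta) \in U_y$ and $f(U_y) \cap \Gamma' \neq \varnothing$, so $U_y$ is a $J$-domain, contradicting analytic stability directly from Definition~\ref{Def: Stable}. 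This is the argument you are gesturing at, but the characterization of indeterminacy via the reduction map is the substantive point that needs to be established, not assumed.
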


In the context of complex projective surfaces, algebraic stability of $F$ is equivalent to the condition that $(F^*)^n = (F^n)^*$ for all $n \geq 1$ as operators on the Picard group.  If $F$ is bimeromorphic, then Diller and Favre have shown that an algebraically stable modification always exists \cite{Diller-Favre_Bimeromorphic_2001}.  Favre provided examples of monomial maps that show that this is not necessarily the case when $F$ is not birational \cite{Favre_Monomial_Examples}.  However, Favre and Jonsson have shown that for polynomial maps $F: \C^2\to\C^2$, a projective compactification $X$ of $\C^2$ always exists for which some iterate of $F: X \dashrightarrow X$ is algebraically stable \cite{FJ_Dynamical_Compactification}. We note that the latter article uses dynamics on a valuation space similar to the Berkovich line in order to deduce the existence of an algebraically stable resolution; the dynamics, and consequently their arguments, are of a very different flavor than those used here.  The new article of Diller and Lin also addresses the existence of algebraically stable modifications for rational maps on surfaces \cite{Diller-Lin}.  

\begin{remark}
The hypothesis on the surface $X$ in Theorem~E means that we only consider surfaces obtained from the product surface $\D\times\P^1$ by an inductive sequence of blow-ups over the smooth points of the central fiber $\{t=0\}$, followed by the blow-down of any subset of these exceptional curves.  In addition, Theorem~E asserts no control over the singularities of the proper modification~$Y$. We expect that it is possible to find $Y$ with at worst quotient singularities over $t = 0$, but we were unable to deduce it from our methods. 
\end{remark}

Finally, we observe that there is a connection between our notion of analytic stability and the arithmetic-dynamical notion of weak N\'eron model \cite{Hsia_Weak_Neron_1996,Briend-Hsia_wNm}. Let $\ell$ be a discretely valued field with valuation ring $\OO$, and let $f \in \ell(z)$ be a rational function. A weak N\'eron model for $(\PP^1_{\ell}, f)$ is a pair $(\XX, F)$ consisting of a regular semistable $\OO$-scheme $\XX$  and a rational map $F: \XX \to \XX$ such that:
	\begin{itemize}
		\item The generic fiber of $\XX$ is $\PP^1_{\ell}$;
		\item $F$ restricts to $f$ on the generic fiber; and
		\item $F$ restricts to a morphism on the smooth locus $\XX^{\mathrm{sm}}$.
	\end{itemize}
(This definition is slightly stronger than the one in \textit{loc. cit.}, but the proofs implicitly assume they are equivalent.)  Using the arguments in the present paper, one can show the following: Given a weak N\'eron model for $(\PP^1_{\ell}, f)$, one may associate to it a canonical vertex set $\Gamma$ such that $(f, \Gamma)$ is analytically stable, and the associated probability vector of Theorem~C has finite support. The converse (that if $(f,\Gamma)$ is analytically stable, one can associate to it a weak N\'eron model) is not true, though, as illustrated by Examples~\ref{Ex: Needs resolution} and~\ref{Ex: Different coordinates}. Both examples deal with a quadratic polynomial $f$ that admits repelling fixed points defined over the field $\ell = \CC(\!(\sqrt{t})\!)$, which preclude the existence of a weak N\'eron model for $(\PP^1_\ell, f)$  \cite{Hsia_Weak_Neron_1996}. Benedetto and Hsia have independently found the connection between weak N\'eron models and certain special vertex sets \cite{Benedetto-Hsia_wNm}.

\medskip

\noindent \textbf{Acknowledgments.} We would like to thank Rob Benedetto, Jeff Diller, Charles Favre, Liang-Chung Hsia, and Mattias Jonsson for valuable conversations. Finally, we are indebted to the anonymous referees for discovering small errors in earlier versions of our manuscript.  This research was supported by the US National Science Foundation DMS-1302929 and DMS-1517080; Jan Kiwi was supported by the Chile Fondecyt 1110448.

\bigskip
\section{Markov chains and equilibrium measures}
\label{Sec: Markov}

	Our goal in this section is to prepare and prove Theorem~C. In \S\ref{Sec: Counting} we provide a count of preimages in simple domains in terms of locally defined multiplicities. We define a discrete counterpart to the Fatou and Julia sets in \S\ref{Sec: vertex sets}, and in \S\ref{Sec: vertex stability} we define the notion of analytic stability and explore some of its properties. The final subsection contains the proof of Theorem~C.

\begin{convention}
	Throughout this section, we let $k$ be an algebraically closed field that is complete with respect to a nontrivial non-Archimedean absolute value. Note that we do not assume anything about the characteristic or residue characteristic of $k$. The Berkovich projective line over $k$ will be denoted $\Berk$ for brevity. An open disk $D$ in $\Berk$ is an open set with a unique boundary point, say $x$. If $\vec{v} \in T\Berk_x$ is the inward tangent vector defining $D$, we write $D = D(\vec{v})$. For a nonconstant rational function $f \in k(z)$, we write $m_f$ and $s_f$ for the local degree and surplus multiplicity, respectively. (See, e.g., \cite[\S3]{Faber_Berk_RamI_2013}.)  For an open disk $D = D(\vec{v})$ in $\Berk$ with boundary point $x$, we set 
		$$\bar f(D) := D(Tf(\vec{v})),$$
where $Tf: T\Berk_x \to T\Berk_{f(x)}$ is the action on the tangent space.  Note that $\bar f (D) = f(D)$ if and only if the surplus multiplicity $s_f(D)$ is zero.
\end{convention}


\subsection{Counting preimages}
\label{Sec: Counting}
	
	A \textbf{simple domain} $V \subset \Berk$ is an open set with finitely many boundary points, all of which must be of type~II or type~III. Equivalently, a simple domain is the intersection of finitely many open disks $V_i$, where $\partial V_i = \{x_i\}$ is a type~II or~III point for each index $i$. If each $V_i$ has inward tangent vector $\vec{v}_i$ at $x_i$, then we can also write $V = \cap D(\vec{v}_i)$. 

\begin{prop}
\label{Prop: pre-image decomposition} 
	Let $f: \Berk \to \Berk$ be a rational function of degree $d \geq 1$, and let $V$ be a simple domain written as the intersection of $n$ open disks $V_1, \ldots, V_n$.  
\begin{enumerate}
\item For any $y\in\Berk$, 
\[
\#\left(f^{-1}(y) \cap V\right) = \sum_{i \, : \,y\in \bar f(V_i)} m_f(V_i) \; + \, \sum_{i=1}^n s_f(V_i) \, -\, d(n-1),
\]
where all preimages are counted with multiplicity.
\item The image of $V$ under $f$ determines a partition of $\Berk$ into sets
\[
V_{f,I} := \bigcap_{i \in I} \bar f(V_i) \smallsetminus \bigcup_{i \not\in I} \bar f(V_i) \qquad \text{for 
$I \subset \{1, \ldots, n\}$}.
\]
The function $y \mapsto \#\left(f^{-1}(y) \cap V\right)$ is constant on each $V_{f,I}$. 
\end{enumerate}
\end{prop}


\begin{proof}
For each $y \in \Berk$, let $I(y)$ denote the (possibly empty) set of indices in $\{1, \ldots, n\}$ such that $i \in I(y)$ if and only if $y \in \bar f(V_i)$. 
Properties of the local degree and surplus multiplicity of an open disk \cite[Prop.~3.10]{Faber_Berk_RamI_2013} imply that
	$$\#\left(f^{-1}(y) \cap V_i \right) = \varepsilon(i,I(y)) \cdot m_f(V_i) + s_f(V_i)$$
for each $i = 1, \ldots, n$. 
Here $\varepsilon(\cdot,I)$ denotes the indicator function for a set $I$ of indices.

Choose an index $j\in\{1, \ldots, n\}$. The set of pre-images $f^{-1}(y) \cap V$ is precisely the set of pre-images in $V_j$ less the pre-images in the complement of $V_i$ for each index $i \neq j$. Therefore, 
	\begin{align*}
		\#\left(f^{-1}(y) \cap V\right) &= \#\left(f^{-1}(y) \cap V_j\right) 
			- \sum_{i\neq j} \#\left(f^{-1}(y) \cap V_i^c \right) \\
			&= \varepsilon(j,I(y)) \cdot m_f(V_j) + s_f(V_j) 
			- \sum_{i\neq j} 
			\Big(d - \left[\varepsilon(i,I(y)) \cdot m_f(V_i) + s_f(V_i) \right] \Big) \\
			&= \sum_{i \in I(y)} m_f(V_i) + \sum_{i=1}^n s_f(V_i)  - d(n-1).   
	\end{align*}
This completes the proof of the first assertion. The second follows immediately from the observation that, for any subset $I \subset \{1, \ldots, n\}$,  $V_{f,I} = \{y \in \Berk \ : \ I(y) = I \}$. 
\end{proof}


\subsection{Vertex sets}
\label{Sec: vertex sets}

	We now define a discretized counterpart to the Fatou and Julia sets. It is not an exact analogue, though the canonical measure for $f$ is always supported inside the discrete counterpart of the Julia set. 

\begin{define}
	A \textbf{vertex set} for $\Berk$ is a finite nonempty set of type~II points, which we typically denote by $\Gamma$.  The connected components of $\Berk \smallsetminus \Gamma$ will be referred to as \textbf{$\Gamma$-domains}. As a special case, when a $\Gamma$-domain has one boundary point, we call it a \textbf{$\Gamma$-disk}. Write $\SS(\Gamma)$ for the partition of $\Berk$ consisting of the elements of $\Gamma$ and all of its $\Gamma$-domains. 
\end{define}

\begin{define}
	Let $\Gamma$ be a vertex set for  $\Berk$, and let $f: \Berk \to \Berk$ be a rational function with $\deg(f) \geq 2$. A $\Gamma$-domain $U$ will be called an \textbf{$F$-domain} if $f^n(U) \cap \Gamma$ is empty for all $n \geq 1$, and otherwise $U$ will be called a \textbf{$J$-domain}. If $U$ is a $\Gamma$-disk, it will be called an \textbf{$F$-disk} or a \textbf{$J$-disk}, respectively. Write $\JJ(\Gamma) \subset \SS(\Gamma)$ for the subset consisting of all $J$-domains and the elements of $\Gamma$. 
\end{define}

	While the partition $\SS(\Gamma)$ is typically uncountable, the set $\JJ(\Gamma)$ is much smaller.

\begin{lem}
\label{Lem: J countable}
	For a given vertex set $\Gamma$, the set $\JJ(\Gamma)$ is countable. 
\end{lem}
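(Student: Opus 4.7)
The plan is to show that each $J$-domain must contain a preimage of some $\gamma \in \Gamma$ under some iterate $f^n$, and then to bound the number of such preimages.

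First I would unpack the definition: a $\Gamma$-domain $U$ lies in $\JJ(\Gamma)$ precisely when there exists some $n \geq 1$ such that $f^n(U) \cap \Gamma \neq \emptyset$. This is equivalent to the existence of a point $x \in U$ with $f^n(x) \in \Gamma$; in other words, $U$ must meet $f^{-n}(\gamma)$ for some $\gamma \in \Gamma$ and some $n \geq 1$. Writing this out,
\[
\{\text{$J$-domains}\} \subseteq \bigcup_{n \geq 1} \bigcup_{\gamma \in \Gamma} \bigl\{ U \in \SS(\Gamma) : U \cap f^{-n}(\gamma) \neq \emptyset \bigr\}.
\]

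Next, I would observe that since $\deg(f) = d$, the rational function $f$ acts on $\Berk$ as a map of degree $d$, so for any point $\gamma \in \Berk$ the preimage set $f^{-n}(\gamma)$ has cardinality at most $d^n$. Since distinct $\Gamma$-domains are disjoint (they are different connected components of $\Berk \smallsetminus \Gamma$), each element of $f^{-n}(\gamma)$ lies in at most one $\Gamma$-domain. Consequently, the inner set $\{U : U \cap f^{-n}(\gamma) \neq \emptyset\}$ has at most $d^n$ elements. Summing over the finite set $\Gamma$ and over $n \geq 1$, the displayed union is a countable union of finite sets, hence countable. Adjoining the finite set $\Gamma$ itself then yields that $\JJ(\Gamma)$ is countable.

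The step requiring the most care is the bound $|f^{-n}(\gamma)| \leq d^n$ for a type~II point $\gamma$. For type~I points this is immediate from degree considerations, and for general points in $\Berk$ it follows from the standard fact (used throughout the paper) that a degree $d$ rational map on the Berkovich line is at most $d$-to-one as a set map, with local degrees summing to $d$ over any fiber. I would simply cite this and conclude. I do not anticipate a serious obstacle here; the argument is a straightforward counting exercise once the definitions are set up correctly.
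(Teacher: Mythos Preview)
Your proposal is correct and follows essentially the same approach as the paper: both arguments observe that a $J$-domain must contain an element of $f^{-n}(\Gamma)$ for some $n\ge 1$, and then use the bound $|f^{-n}(\gamma)|\le d^n$ together with disjointness of $\Gamma$-domains to conclude. The only cosmetic difference is that the paper partitions the $J$-domains according to the \emph{least} $n$ with $f^n(U)\cap\Gamma\neq\varnothing$, whereas you use a (non-disjoint) covering indexed by all $n$; either presentation yields a countable union of finite sets.
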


\begin{proof}
	Since $\Gamma$ is finite, it suffices to show that the set of $J$-domains is countable. For each integer $n \geq 1$, write
		\[
			\JJ(\Gamma)_n  = \{U \text{ a $J$-domain} \ : \ f^n(U) \cap \Gamma \neq \varnothing, 
				f^i(U) \cap \Gamma = \varnothing \text{ for } i = 1, \ldots, n-1\}.
		\]
By definition, each $J$-domain lies in some $\JJ(\Gamma)_n$. Since $\Gamma$ is finite, and since each vertex has at most $d^n$ distinct pre-images under $f^n$, it follows that $\JJ(\Gamma)_n$ is finite for each $n \geq 1$.  
\end{proof}

\begin{prop}
\label{Prop: Fatou F-ensemble}
The Julia set for $f$ is contained in the union of the sets in $\JJ(\Gamma)$. 
\end{prop}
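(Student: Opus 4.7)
The plan is to prove the contrapositive: if $U$ is an $F$-domain, then $U \cap \julia(f) = \varnothing$. Since every point of $\Berk$ is either in $\Gamma$ or in a unique $\Gamma$-domain (which is either an $F$- or a $J$-domain), this is exactly equivalent to $\julia(f) \subset \bigcup \JJ(\Gamma)$.

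The main observation is immediate from the definitions: if $U$ is an $F$-domain, then $f^n(U) \cap \Gamma = \varnothing$ for every $n \geq 1$, and hence $U \cap f^{-n}(y) = \varnothing$ for every $y \in \Gamma$ and every $n \geq 1$. The key analytic ingredient is the equidistribution theorem of Favre--Rivera-Letelier \cite{Favre_Rivera-Letelier_Ergodic_2010}: for any $y \in \Berk$ that is not $f$-exceptional, the averaged pullbacks $\frac{1}{d^n}(f^n)^*\delta_y$ converge weakly to the equilibrium measure $\mu_f$. Since $\julia(f) = \mathrm{supp}(\mu_f)$ in the Berkovich setting, the convergence forces the inclusion
\[
\julia(f) \;\subset\; \overline{\textstyle\bigcup_{n \geq 1} f^{-n}(y)}.
\]
Picking a non-exceptional $y$ inside $\Gamma$, the observation above places the entire preimage orbit $\bigcup_{n \geq 1} f^{-n}(y)$ inside the closed set $\Berk \smallsetminus U$; taking closures yields $\julia(f) \cap U = \varnothing$, as desired.

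The main obstacle is thus reduced to the degenerate case in which every point of $\Gamma$ is $f$-exceptional. Here I would invoke the structural fact that the exceptional set of a Berkovich rational map of degree $d \geq 2$ contains at most one type~II point: a totally $f$-invariant point $\zeta$ corresponding to potentially good reduction, at which $\mu_f = \delta_\zeta$. Because $\Gamma$ is a nonempty set of type~II points, the only possibility in this degenerate case is $\Gamma = \{\zeta\}$. But then $\julia(f) = \mathrm{supp}(\mu_f) = \{\zeta\} \subset \Gamma \subset \bigcup \JJ(\Gamma)$, so the conclusion holds trivially. Combining the two cases finishes the proof.
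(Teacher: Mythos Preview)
Your argument is correct, but it is considerably heavier than the paper's. The paper dispatches the proposition in two lines: if $U$ is an $F$-domain, then by definition $\bigcup_{n\ge 0} f^n(U)$ omits the nonempty set $\Gamma$, and the standard Montel-type characterization of the Berkovich Fatou set immediately gives $U\subset\fatou(f)$; taking complements finishes. You instead invoke the Favre--Rivera-Letelier equidistribution theorem together with $\julia(f)=\supp(\mu_f)$, then run a Portmanteau-style argument to place $\julia(f)$ inside the closure of the backward orbit of a non-exceptional $y\in\Gamma$. This works, and your handling of the degenerate case (all of $\Gamma$ exceptional, forcing $\Gamma=\{\zeta\}$ with $\mu_f=\delta_\zeta$) is clean, but equidistribution is a substantially deeper result than what is needed here. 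The paper's route buys simplicity and avoids measure-theoretic machinery entirely; your route has the minor virtue of making the degenerate case fully explicit, whereas the paper's one-liner tacitly relies on $\Gamma$ containing a non-exceptional point (or on the trivial observation that when $\Gamma=\{\zeta\}$ is the good-reduction point, every $\Gamma$-domain already misses $\julia(f)=\{\zeta\}$).
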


\begin{proof}
If $U$ is an $F$-domain, then $\Gamma$ does not intersect the union of the forward iterates $\bigcup_{n \geq 0} f^n(U)$, so that $U$ must lie in the Fatou set for $f$. Now the union of the $F$-domains is contained in the Fatou set for $f$, and taking complements shows that the Julia set is contained in the union of $\Gamma$ with all of the $J$-domains. 
\end{proof}


\subsection{Analytic stability}
\label{Sec: vertex stability}

	The following definition parallels the one used in the theory of rational self-maps of complex surfaces.  (We will discuss the correspondence in Section \ref{Sec: Resolutions of surfaces}.)

\begin{define}
\label{Def: Stable}
	Let $\Gamma$ be a vertex set for $\Berk$, and let $f: \Berk \to \Berk$ be a rational function with $\deg(f) \geq 2$. One says that the pair $(f, \Gamma)$ is \textbf{analytically stable} if for each $\zeta \in \Gamma$, either $f(\zeta) \in \Gamma$ or $f(\zeta) \in U$ for some $F$-domain $U$. 
\end{define}

\begin{lem}
\label{Lem: F-domains}
Suppose that $(f, \Gamma)$ is analytically stable.  For each $F$-domain $U$ there is another $F$-domain $V$ such that $f(U) \subset V$. 
\end{lem}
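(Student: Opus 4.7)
The plan is two-step: first, locate a $\Gamma$-domain $V$ with $f(U) \subset V$; second, verify that this $V$ must itself be an $F$-domain. For the first step, the $n=1$ instance of the $F$-domain hypothesis gives $f(U) \cap \Gamma = \varnothing$, and since $f$ is continuous the image $f(U)$ is connected. Hence $f(U)$ lies in a unique connected component of $\Berk \setminus \Gamma$, which is by definition a $\Gamma$-domain $V$.

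For the second step I would first record the standard identity $\overline{f(U)} = f(\bar U) = f(U) \cup f(\partial U)$. This follows because $\bar U$ is compact in the Hausdorff space $\Berk$ (so $f(\bar U)$ is closed containing $f(U)$, giving one inclusion), while every point of $\partial U$ is a limit of points of $U$ (giving the other). In particular $f(\partial U) \subset \bar V$. Since $\partial U \subset \Gamma$, analytic stability supplies, for each $\zeta \in \partial U$, one of two alternatives: $f(\zeta) \in \Gamma$, or $f(\zeta) \in W$ for some $F$-domain $W$.

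I would now split into two cases. \emph{Case A:} some $\zeta \in \partial U$ satisfies $f(\zeta) \in W$ for an $F$-domain $W$. Since $W$ is a $\Gamma$-domain it is disjoint from $\Gamma \supset \partial V$, so $f(\zeta) \in W \cap \bar V$ must actually lie in $W \cap V$. Two distinct connected components of $\Berk \setminus \Gamma$ are disjoint, so $W = V$, making $V$ an $F$-domain. \emph{Case B:} every $\zeta \in \partial U$ has $f(\zeta) \in \Gamma$. Then $f(\partial U) \subset \Gamma \cap \bar V = \partial V$, and the topological boundary of $f(U)$ in $\Berk$, being contained in $f(\partial U)$, is disjoint from $V$. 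This forces $f(U)$ to be closed in $V$; combined with $f(U)$ being open (rational maps on $\Berk$ are open), $f(U)$ is clopen in the connected set $V$, and being nonempty must equal $V$. Then $f^n(V) = f^{n+1}(U)$ for every $n \geq 1$, which avoids $\Gamma$ by the $F$-domain property of $U$, so $V$ is also an $F$-domain.

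The step requiring the most care is Case B: the decisive point is that when all boundary points of $U$ land in $\Gamma$, the image $f(U)$ inherits no interior boundary inside $V$, forcing the somewhat surprising equality $f(U) = V$ by connectedness. Once this equality is in place, the identity $f^n(V) = f^{n+1}(U)$ immediately reduces the $F$-domain property of $V$ to that of $U$.
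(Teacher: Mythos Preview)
Your proof is correct and follows essentially the same approach as the paper: both arguments first use the $n=1$ instance of the $F$-domain condition to place $f(U)$ inside a single $\Gamma$-domain $V$, and then use analytic stability on $\partial U \subset \Gamma$ together with the open-mapping property of $f$ to conclude either that $V$ is already an $F$-domain or that $f(U) = V$, whence $V$ inherits the $F$-property from $U$. The only cosmetic difference is that the paper frames the argument as a proof by contradiction (assume $V$ is a $J$-domain and deduce that $U$ is a $J$-domain), whereas you organize it as a direct case split; your version also makes the clopen argument for $f(U) = V$ explicit, which the paper leaves implicit.
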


\begin{proof}
	Suppose that $f(U)$ is not contained in an $F$-domain. Then it is either contained in a $J$-domain $V$, or it is not fully contained in any $\Gamma$-domain. In the former case, no boundary point of $U$ can map into $V$ since $(f, \Gamma)$ is analytically stable. Hence $f(U) = V$, which forces $U$ to be a $J$-domain. In the latter case $f(U)$ must contain an element of $\Gamma$, which again implies that $U$ is a $J$-domain. In either case we have reached a contradiction. 
\end{proof}

\begin{lem}
\label{Lem: Multiplicity Definition}
Suppose that $(f, \Gamma)$ is analytically stable.  	Let $U, V \in \JJ(\Gamma)$ be $J$-domains or vertices.  Then the function 
		$$y \mapsto \#\left(f^{-1}(y) \cap V\right)$$ 
is constant on $U$, where pre-images are counted with multiplicities. 
\end{lem}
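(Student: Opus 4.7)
The plan is to reduce to the only nontrivial case, namely when both $U$ and $V$ are $J$-domains (if $U$ is a single vertex, there is nothing to show; and if $V = \{\zeta\}$ is a vertex while $U$ is a $J$-domain, then $\#(f^{-1}(y) \cap V)$ equals $m_f(\zeta)$ when $y = f(\zeta)$ and $0$ otherwise, and analytic stability puts $f(\zeta)$ into $\Gamma$ or into an $F$-domain, so $f(\zeta) \notin U$ and the count is identically $0$).

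For the main case, I would write the $J$-domain $V$ as an intersection of open disks $V_1, \ldots, V_n$ whose boundary points $x_1, \ldots, x_n$ all lie in $\Gamma$ (and hence are of type~II), and apply Proposition~\ref{Prop: pre-image decomposition}:
\[
  \#\bigl(f^{-1}(y) \cap V\bigr)
    \,=\, \sum_{i \,:\, y \in \bar f(V_i)} m_f(V_i) \,+\, \sum_{i=1}^n s_f(V_i) \,-\, d(n-1).
\]
The second and third terms are manifestly independent of $y$, so the task reduces to showing that the index set $\{i : y \in \bar f(V_i)\}$ is constant as $y$ ranges over $U$, i.e.\ that for each $i$ we have $U \subset \bar f(V_i)$ or $U \cap \bar f(V_i) = \varnothing$.

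Here is where analytic stability enters, and this is the step I expect to be the crux. By definition $\bar f(V_i) = D(Tf(\vec v_i))$ is an open disk whose unique boundary point is $f(x_i)$, and since $x_i \in \Gamma$, analytic stability forces $f(x_i) \in \Gamma$ or $f(x_i)$ to lie in some $F$-domain. In either case $f(x_i)$ does not lie in the $J$-domain $U$. Now $\Berk$ is an $\RR$-tree and $\bar f(V_i)$ is exactly one connected component of $\Berk \setminus \{f(x_i)\}$; since $U$ is connected and avoids $\{f(x_i)\}$, it lies in a single such component, which is either $\bar f(V_i)$ or disjoint from it. This gives the required dichotomy and completes the argument.

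The main obstacle is the dichotomy claim $U \subset \bar f(V_i)$ or $U \cap \bar f(V_i) = \varnothing$: without analytic stability one could have $f(x_i)$ sitting inside $U$, in which case $U$ would straddle the boundary of $\bar f(V_i)$ and the count would genuinely jump as $y$ crosses $f(x_i)$. Thus the lemma is, at heart, a restatement of why the definition of analytic stability is the right one for getting locally constant preimage counts.
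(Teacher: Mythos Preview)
Your proof is correct and follows essentially the same approach as the paper: both reduce to the case of two $J$-domains, invoke Proposition~\ref{Prop: pre-image decomposition}, and then use analytic stability to conclude that no boundary point $f(x_i)$ of any $\bar f(V_i)$ lies in the $J$-domain $U$. Your phrasing in terms of connected components of $\Berk \smallsetminus \{f(x_i)\}$ is a slightly more explicit version of the paper's argument that $U$ lies in a single partition piece $V_{f,I}$, but the content is identical.
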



\begin{proof}
If $U$ is an element of $\Gamma$, then $U$ is a singleton and the result is evident.  If $U$ is a $J$-domain, then $f(\Gamma) \cap U = \varnothing$ by analytic stability, and  $\#\left(f^{-1}(y) \cap V\right) = 0$ whenever $V \in \Gamma$. 

Now suppose that $U$ and $V$ are $J$-domains. We may write $V$ as the intersection of $n$ open disks $V_1, \ldots, V_n$.  By Proposition~\ref{Prop: pre-image decomposition}, it suffices to show that $U$ lies in one of the subsets $V_{f,I}$ for an index set $I \subset \{1, \ldots, n\}$.  If no such index set exists, then $U$ must contain a boundary point of some $V_{f,J}$, say $y$, since $U$ is connected. But $y$ is the boundary point of one of the open disks $\bar f(V_j)$, so that the vertex $\zeta \in \partial V_j$ satisfies $y = f(\zeta) \in U$. That is, $f(\Gamma) \cap U \neq \varnothing$, a contradiction to analytic stability. 
\end{proof}

\begin{define}
\label{Def: Multiplicities}
Assume that $(f, \Gamma)$ is analytically stable.  	For each pair of elements $U,V \in \JJ(\Gamma)$, we define an integer $m_{U,V} \in \{0, \ldots, d\}$ as follows. For $y \in U$, set 
	 $$m_{U,V} = \#\left(f^{-1}(y) \cap V\right).$$ 
	By Lemma~\ref{Lem: Multiplicity Definition}, $m_{U,V}$ is independent of the choice of $y$. 
\end{define}

	The multiplicities $m_{U,V}$ are only well defined in the presence of analytic stability. Moreover, they are combinatorial in the sense that they can be computed via local mapping degrees and surplus multiplicities at points of $\Gamma$. More precisely, Proposition~\ref{Prop: pre-image decomposition} and the proof of Lemma~\ref{Lem: Multiplicity Definition} show that:
\[
	m_{U,V} = \begin{cases}
		m_f(V) & \text{for $U, V\in\Gamma$ with $f(V) = U$} \\
		\sum_{\zeta\in V: f(\zeta) = U} m_f(\zeta) & \text{for } U\in\Gamma \text{ and } J\text{-domain } V \\
		\sum_{i: U \subset \bar f(V_i)} m_f(V_i) + \sum_{i=1}^n s_f(V_i) - d(n-1) & \text{for } J\text{-domains $U$ and }V = \cap_{i=1}^n V_i  \\
		0 & \text{otherwise}
	\end{cases}
\]
The (non-negative integer valued) quantities $m_f(V)$, $m_f(V_i)$, and $s_f(V_i)$ may be determined algorithmically via reductions of $f$ in various coordinates; we stress that this is a finite computation.

\begin{lem}
\label{Lem: Consistent Multiplicities}
Suppose that $(f, \Gamma)$ is analytically stable.  	For each $U \in \JJ(\Gamma)$, we have
		\[
			\sum_{V \in \JJ(\Gamma)} m_{U,V} = \deg(f). 
		\]
\end{lem}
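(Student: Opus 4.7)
The plan is to reduce the identity to the global count $\#f^{-1}(y) = d$ on $\Berk$ for any fixed $y \in U$, and then to verify that every preimage of $y$ lies in some element of $\JJ(\Gamma)$ rather than in an $F$-domain. Concretely, fix any point $y \in U$. For each element $V \in \SS(\Gamma)$, set $n(V) := \#(f^{-1}(y) \cap V)$ counted with multiplicity, so that by Definition~\ref{Def: Multiplicities} we have $m_{U,V} = n(V)$ whenever $V \in \JJ(\Gamma)$. Since $\SS(\Gamma)$ is a partition of $\Berk$ and $f$ has degree $d$,
\[
\sum_{V \in \SS(\Gamma)} n(V) = \#f^{-1}(y) = d.
\]
So it suffices to show that $n(W) = 0$ for every $F$-domain $W$, i.e., that no preimage of $y$ lies in an $F$-domain.

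For this, suppose for contradiction that some preimage $x$ of $y$ lies in an $F$-domain $W$. By Lemma~\ref{Lem: F-domains}, there is an $F$-domain $V'$ such that $f(W) \subset V'$, so $y = f(x) \in V'$. On the other hand, $U \in \JJ(\Gamma)$ is either a vertex of $\Gamma$ or a $J$-domain. In either case $U$ and $V'$ are distinct elements of the partition $\SS(\Gamma)$ (an $F$-domain is never a vertex or a $J$-domain), so $U \cap V' = \varnothing$, contradicting $y \in U \cap V'$. Hence no preimage of $y$ lies in an $F$-domain.

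Combining these observations,
\[
\sum_{V \in \JJ(\Gamma)} m_{U,V} = \sum_{V \in \JJ(\Gamma)} n(V) = \sum_{V \in \SS(\Gamma)} n(V) = d,
\]
as required.

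The only real content is the step showing preimages of points of $U$ cannot land in $F$-domains; everything else is just bookkeeping. That step rests on Lemma~\ref{Lem: F-domains}, which in turn is the point at which analytic stability enters. No other ingredient (not even the counting formula of Proposition~\ref{Prop: pre-image decomposition}) is needed here, because the definition of $m_{U,V}$ already encodes preimage counts, and $\SS(\Gamma)$ is a genuine partition. I do not anticipate a significant obstacle.
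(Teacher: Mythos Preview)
Your proof is correct and is essentially identical to the paper's own argument: both fix $y\in U$, observe via Lemma~\ref{Lem: F-domains} (and the fact that $U\in\JJ(\Gamma)$) that no preimage of $y$ can lie in an $F$-domain, and then sum the preimage counts over the partition $\SS(\Gamma)$ to obtain $d$. The paper compresses this into two sentences, but the content is the same.
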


\begin{proof}
	Let $y \in U$ be any point. Observe that, by analytic stability and Lemma \ref{Lem: F-domains}, each pre-image of $y$ must be a vertex or else lie in a $J$-domain. 	The result now follows:
		\[
			\sum_{V \in \JJ(\Gamma)} m_{U,V} = \sum_{V \in \JJ(\Gamma)} 
			\#\left(f^{-1}(y) \cap V\right) = \#f^{-1}(y) = \deg(f). \qedhere
		\]		
\end{proof}


\subsection{The proof of Theorem~C }

	We restate Theorem~C now that we have all of the necessary definitions in hand. 
	
\begin{thmC}
	Let $k$ be an algebraically closed field that is complete with respect to a nontrivial non-Archimedean absolute value. Let $f : \Berk \to \Berk$ be a rational function defined over $k$ of degree $d \geq 2$, and let $\Gamma$ be a vertex set in $\Berk$. Suppose that $(f, \Gamma)$ is analytically stable and that the Julia set for $f$ is not contained in $\Gamma$. Writing $\JJ = \JJ(\Gamma)$ for the set of $J$-domains and vertices of $\Gamma$, we define a $|\JJ| \times |\JJ|$ matrix $P$ with $(U,V)$-entry
	\[
		P_{U,V} = \frac{m_{U,V}}{d}.
	\]
Then $P$ is the transition matrix for a countable state Markov chain with a unique stationary probability vector $\nu: \JJ \to [0,1]$. The rows of $P^n$ converge pointwise to $\nu$, and the $U$-entry of $\nu$ satisfies $\nu(U) = \mu_f(U)$ for each $U \in \JJ$, where $\mu_f$ is the equilibrium measure for~$f$. 	
\end{thmC}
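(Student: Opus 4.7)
The plan is to identify the Markov chain $P$ with the normalized pullback operator $\frac{1}{d}f^*$ restricted to probability measures concentrated on the partition $\JJ$, and then to deduce the stated properties from the corresponding facts about the equilibrium measure $\mu_f$. That $P$ is a stochastic matrix is immediate: the entries lie in $[0,1]$ by definition, and each row sums to $1$ by Lemma~\ref{Lem: Consistent Multiplicities}. The heart of the identification is the inductive identity
\[
(P^n)_{U,V} \;=\; \frac{1}{d^n}\,\#\bigl(f^{-n}(y)\cap V\bigr) \;=\; \mu_n^y(V), \qquad \mu_n^y := \frac{1}{d^n}(f^n)^{*}\delta_y,
\]
valid for any $y\in U\in\JJ$ and any $V\in\JJ$, with preimages counted with multiplicity. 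The base case $n=1$ is Definition~\ref{Def: Multiplicities}. For the inductive step I would use that, by analytic stability and Lemma~\ref{Lem: F-domains}, every preimage of a point of $\bigcup\JJ$ again lies in $\bigcup\JJ$; then the Chapman--Kolmogorov expansion $(P^{n+1})_{U,V}=\sum_{W\in\JJ}(P^n)_{U,W}P_{W,V}$ matches the pullback-of-pullback count exactly.

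Given this identification, the vector $\nu(V):=\mu_f(V)$ is a stationary probability vector: Proposition~\ref{Prop: Fatou F-ensemble} places the support of $\mu_f$ inside $\bigcup\JJ$, so the entries sum to $1$, and the standard invariance $\tfrac{1}{d}f^{*}\mu_f=\mu_f$ of the equilibrium measure translates via the $n=1$ case above into $\nu P=\nu$. For the pointwise convergence $(P^n)_{U,V}\to\nu(V)$, I would invoke the equidistribution theorem of Favre and Rivera-Letelier \cite{Favre_Rivera-Letelier_Ergodic_2010}: outside a small exceptional set of totally invariant points, $\mu_n^y\to\mu_f$ in the weak-$*$ topology. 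When $U$ is a $J$-domain, a non-exceptional $y$ can be chosen in $U$ since $U$ is open and uncountable; when $U=\{\zeta\}$ is a vertex, the hypothesis that the Julia set is not contained in $\Gamma$ precludes $\zeta$ from being totally invariant (cf.\ the remark following Theorem~C), so $y=\zeta$ itself works.

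The main obstacle I anticipate is upgrading weak-$*$ convergence to pointwise convergence on $\JJ$, since elements of $\JJ$ are not a priori $\mu_f$-continuity sets. I would resolve this by first observing that an atom of $\mu_f$ at a type~II point forces that point to be totally invariant and $\mu_f$ to be its Dirac mass; our hypothesis rules out such an atom at any vertex of $\Gamma$, and if the atom sits in the interior of a $J$-domain $U_0$ then the convergence reduces to an almost trivial check (every $J$-domain disjoint from $U_0$ has closure disjoint from the atom, forcing its mass to $0$; the remaining mass collects in $U_0$). In the generic case (no atoms at vertices), upper semicontinuity of $1_{\{\zeta\}}$ yields $\limsup_n\mu_n^y(\{\zeta\})\le\mu_f(\{\zeta\})=0$ for each vertex $\zeta\in\Gamma$; for a $J$-domain $V$, lower semicontinuity of $1_V$ gives $\liminf_n\mu_n^y(V)\ge\mu_f(V)$, while applying the upper inequality to the closed set $\overline{V}$ (whose boundary $\partial V\subset\Gamma$ satisfies $\mu_f(\partial V)=0$) delivers the reverse direction. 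This produces the claimed row-by-row convergence. Uniqueness of $\nu$ then follows for free: any stationary $\rho$ satisfies $\rho=\rho P^n$ for every $n$, and dominated convergence on the countable sum gives $\rho(V)=\sum_U\rho(U)\,\nu(V)=\nu(V)$.
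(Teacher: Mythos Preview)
Your proposal is correct and follows essentially the same route as the paper: identify $(P^n)_{U,V}$ with $d^{-n}\#(f^{-n}(y)\cap V)$ by induction, verify that $\nu(V)=\mu_f(V)$ is stationary via $f^*\mu_f=d\,\mu_f$, invoke Favre--Rivera-Letelier equidistribution for the row convergence, and deduce uniqueness by dominated convergence on $\rho=\rho P^n$. The only substantive point where you elaborate beyond the paper is the upgrade from weak-$*$ convergence to convergence on the sets $V\in\JJ$; the paper dispatches this in one line by noting $\mu_f(\partial V)=0$ (since $\partial V\subset\Gamma$ and $\mu_f$ charges a type~II point only in the simple-reduction case, which the hypothesis excludes from $\Gamma$), whereas you spell out the portmanteau argument via semicontinuity---your separate treatment of an atom inside a $J$-domain is redundant, since it is already covered by the ``no atoms at vertices'' case. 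One small simplification: when $U=\{\zeta\}$ is a vertex you need not argue that $\zeta$ is non-totally-invariant; exceptional points for the equidistribution theorem are $k$-rational (type~I), so any type~II $\zeta$ is automatically admissible.
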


	The following lemma makes the necessary connection between iterated pullback via $f$ and matrix multiplication.

\begin{lem}
\label{Lem: Matrix multiplication}
	Fix a $J$-domain or vertex $U_0 \in \JJ(\Gamma)$. For each $n \geq 1$, each $V \in \JJ(\Gamma)$, and each $y \in U_0$, we have
	\[
		\sum_{\substack{x \in V \\ f^n(x) = y}} m_{f^n}(x)
		= \sum_{U_1, \ldots, U_{n-1} \in \JJ(\Gamma)} m_{U_0,U_1} \cdot m_{U_1, U_2} \cdot m_{U_2, U_3}
				 \cdots m_{U_{n-1}, V}.
	\]
\end{lem}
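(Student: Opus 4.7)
The plan is to prove this by induction on $n$, using the multiplicativity of the local degree $m_{f^{n+1}}(x) = m_f(x) \cdot m_{f^n}(f(x))$ together with the definition of $m_{U,V}$ and the behavior of preimages under analytic stability.

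For the base case $n = 1$, the identity reduces to $\sum_{x \in V, f(x) = y} m_f(x) = m_{U_0, V}$, which is precisely the definition of $m_{U_0, V}$ applied to $y \in U_0$ (note $m_{f^1} = m_f$, and an empty sum on the right side when $n=1$ collapses the product to the single factor $m_{U_0, V}$).

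For the inductive step, assume the formula holds for $n$. I would start with
\[
\sum_{\substack{x \in V \\ f^{n+1}(x) = y}} m_{f^{n+1}}(x) = \sum_{\substack{x \in V \\ f^{n+1}(x) = y}} m_f(x)\cdot m_{f^n}(f(x)),
\]
and reindex by grouping the sum according to $z := f(x)$:
\[
= \sum_{\substack{z \\ f^n(z) = y}} m_{f^n}(z) \sum_{\substack{x \in V \\ f(x) = z}} m_f(x).
\]
The critical point is that each such $z$ lies in some element $U_n \in \JJ(\Gamma)$: since $y \in U_0 \in \JJ(\Gamma)$, an inductive walk back along the orbit (as in Lemma~\ref{Lem: Consistent Multiplicities} and Lemma~\ref{Lem: F-domains}) shows that analytic stability prevents any iterated preimage from entering an $F$-domain, so every preimage lies in a vertex or $J$-domain. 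Once $z \in U_n$, the inner sum equals $m_{U_n, V}$ by Lemma~\ref{Lem: Multiplicity Definition}. Grouping the outer sum by the element $U_n$ containing $z$ gives
\[
\sum_{U_n \in \JJ(\Gamma)} m_{U_n, V} \sum_{\substack{z \in U_n \\ f^n(z) = y}} m_{f^n}(z),
\]
and applying the inductive hypothesis to the inner sum (with $V$ replaced by $U_n$) yields
\[
\sum_{U_n \in \JJ(\Gamma)} m_{U_n, V} \sum_{U_1, \ldots, U_{n-1} \in \JJ(\Gamma)} m_{U_0, U_1} \cdots m_{U_{n-1}, U_n},
\]
which is exactly the desired right-hand side for $n+1$.

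The only nontrivial step is verifying that every preimage of $y$ under $f^k$ lands in an element of $\JJ(\Gamma)$ for every $k \leq n$, so that the outer grouping by $U_n$ actually exhausts all $z$ with $f^n(z) = y$. This is a direct consequence of analytic stability: no vertex of $\Gamma$ maps into a $J$-domain (analytic stability forces its image into $\Gamma$ or into an $F$-domain), and by Lemma~\ref{Lem: F-domains} forward iterates of $F$-domains stay in $F$-domains, so no preimage orbit of a point in $U_0 \in \JJ(\Gamma)$ can cross into an $F$-domain. Modulo this bookkeeping, the proof is a routine induction enabled by the chain rule for local degrees.
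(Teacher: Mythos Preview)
Your proof is correct and follows essentially the same route as the paper's: induction on $n$, base case from the definition of $m_{U_0,V}$, and for the inductive step the chain rule $m_{f^{n+1}}(x) = m_f(x)\,m_{f^n}(f(x))$, regrouping by $z = f(x)$, observing that each such $z$ lies in some $U_n \in \JJ(\Gamma)$, and then applying the induction hypothesis. One small remark: in your final paragraph, the clause ``no vertex of $\Gamma$ maps into a $J$-domain'' is not actually what is needed here---the relevant point is only that $F$-domains map into $F$-domains (Lemma~\ref{Lem: F-domains}), so that no preimage of $y \in U_0$ can lie in an $F$-domain; you do state this correctly in the second half of the sentence.
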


\begin{proof}
	For ease of notation, let us write $\JJ = \JJ(\Gamma)$. The proof proceeds by induction. The case $n = 1$ follows immediately from the definitions:
	\[
		\sum_{\substack{x \in V \\ f(x) = y}} m_{f}(x) = \#\left(f^{-1}(y) \cap V\right) = m_{U_0,V}.
	\]
	
Now suppose that the result holds for some $n\geq 1$, and let us deduce it for $n+1$. We decompose the morphism $f^{n+1}: \Berk \to \Berk$ as $f^n \circ f$. Since the local degree $m_f(\cdot)$ is multiplicative, we see that
	\[
		\sum_{\substack{x \in V \\ f^{n+1}(x) = y}} m_{f^{n+1}}(x) 
		= \sum_{\substack{z \, :\, f^n(z) = y \\ z \in f(V)}} m_{f^n}(z) 
			\ \left( \sum_{\substack{x \, : \, f(x) = z \\ x\in V}} m_f(x) \right).
	\]
Let $z_n$ be a solution to the equation $f^n(z) = y$. As $y \in U_0$ and $U_0 \in \JJ(\Gamma)$, it follows that $z_n$ is either or a vertex or else lies in some $J$-domain, say $U_n$. Grouping the terms of the first sum above according to the distinct $J$-domains and vertices  $U_n \subset f(V)$, we find that
	\begin{align*}
		\sum_{\substack{x \in V \\ f^{n+1}(x) = y}} m_{f^{n+1}}(x) 
		&=  \sum_{\substack{U_n \in \JJ \\ U_n \subset f(V)}} 
			\sum_{\substack{z \, :\, f^n(z) = y \\ z \in U_n}} m_{f^n}(z) \ \cdot \ \#\left(f^{-1}(z) \cap V\right) \\
		&= \sum_{\substack{U_n \in \JJ \\ U_n \subset f(V)}} m_{U_n, V}
			\left(  \sum_{\substack{z \, :\, f^n(z) = y \\ z \in U_n}} 
			m_{f^n}(z) \right)\\
		&= \sum_{\substack{U_n \in \JJ \\ U_n \subset f(V)}} m_{U_n, V} 
			\sum_{U_1, \ldots, U_{n-1} \in \JJ} m_{U_0,U_1} \cdot m_{U_1, U_2} \cdot m_{U_2, U_3}
				 \cdots m_{U_{n-1}, U_n} \\
		&= \sum_{U_1, \ldots, U_n \in \JJ} 
			m_{U_0,U_1} \cdot m_{U_1, U_2} \cdot m_{U_2, U_3} \cdots m_{U_n, V}.
	\end{align*}
The second equality follows from the independence of $\#\left(f^{-1}(z) \cap V\right)$ in $z \in U_n$, while the second to last equality uses the induction hypothesis.	Note that $m_{U_n,V} = 0$ if $U_n$ is not contained in the image $f(V)$. 
\end{proof}

\begin{proof}[Proof of Theorem~C]
For ease of exposition, we present the proof in several steps. \\

\noindent \textbf{Step 1:} We claim that the quantities $P_{U,V}$ define a countable state Markov chain on the state space $\JJ$. It is clear the $0 \leq P_{U,V} \leq 1$ for all $U,V \in \JJ$, and we have already seen that $\JJ$ is countable (Lemma~\ref{Lem: J countable}). It remains to show that for each $U \in \JJ$, we have
		\[
			\sum_{V \in \JJ} P_{U,V} = 1.
		\]
This is precisely the content of Lemma~\ref{Lem: Consistent Multiplicities}. 

\medskip

\noindent \textbf{Step 2:} We claim that the vector $\omega_f: \JJ \to [0,1]$ given by $\omega_f(U) = \mu_f(U)$ is a stationary probability vector for $P$. We have already seen that each $F$-domain lies in the Fatou set of $f$ (Proposition~\ref{Prop: Fatou F-ensemble}), so that $\mu_f$ does not charge $F$-domains. Hence $\sum_{U \in \JJ} \omega_f(U) = 1$. The measure $\mu_f$ satisfies the pullback relation $f^* \mu_f = d \cdot \mu_f$ as Borel measures on $\Berk$ \cite{Favre_Rivera-Letelier_Ergodic_2010}. Integrating this formula against the characteristic function of $V$ yields
	\[
		\mu_f(V) = \frac{1}{d} \int_{\Berk} \#\left(f^{-1}(y) \cap V\right) \ \mu_f(y) 
			= \sum_{U \in \JJ}  \frac{m_{U,V}}{d} \ \mu_f(U) = \sum_{U \in \JJ} P_{U,V} \ \mu_f(U).
	\]
Evidently this is equivalent to $\omega_f = \omega_f P$. 

\medskip

\noindent \textbf{Step 3:} We claim that for each $n \geq 1$, each pair of subsets $U,V \in \JJ$, and each $y \in U$, we have
	\begin{equation}
	\label{Eq: Markov Measure formula}
		(P^n)_{U,V} = \int_V d^{-n} \left(f^*\right)^n \delta_y.
	\end{equation}
This statement is equivalent to the one given in Lemma~\ref{Lem: Matrix multiplication}. 

\medskip

\noindent \textbf{Step 4:} We claim that the invariant measure $\mu_f$ does not charge $\Gamma$. In general, $\mu_f$ can only charge a point of $\Berk$ if $f$ has simple reduction \cite[Cor.~10.47]{Baker-Rumely_BerkBook_2010}, in which case $\mu_f = \delta_{\zeta}$ for some type~II point $\zeta \in \Berk$. Here the Julia set is $\julia(f) = \{\zeta\}$. Our hypotheses ensure that $\zeta \not\in \Gamma$ if $f$ has simple reduction. So $\mu_f(\Gamma) = 0$.  

\medskip

\noindent \textbf{Step 5:} We claim that the matrix powers $P^n$ converge entry-by-entry to $\mathbf{1}\omega_f$, where $\mathbf{1}$ is the column vector of 1's and $\omega_f(U) = \mu_f(U)$ as in Step~4. Equivalently, $(P^n)_{U,V} \to \mu_f(V)$ as $n \to \infty$, where $(P^n)_{U,V}$ is the $(U,V)$-entry of $P^n$. 

	Fix a $J$-domain or vertex $U$ and choose any point $y \in U$ that is not a $k$-rational exceptional point for $f$. Favre and Rivera-Letelier's equidistribution of iterated pre-images \cite[Thm.~10.36]{Baker-Rumely_BerkBook_2010} shows that $d^{-n} \left(f^*\right)^n \delta_y \to \mu_f$ weakly as $n \to \infty$. (This is weak converge of Borel measures on $\Berk$.) Since $\mu_f$ does not charge the boundary of $V$ (Step~4), the result in Step~3 shows that
	\[
		(P^n)_{U,V} = \int_V d^{-n} \left(f^*\right)^n \delta_y \to  \mu_f(V). 
	\]

\medskip

\noindent \textbf{Step 6:} We claim that if $\nu$ is a probability vector such that $\nu P = \nu$, then $\nu = \omega_f$ (as in Step~2). By induction,  we have $\nu = \nu P^n$ for each $n \geq 1$. But then letting $n \to \infty$ gives
	\[
		\nu = \nu P^n \to \nu (\mathbf{1}\omega_f) = (\nu \mathbf{1}) \omega_f = \omega_f.
	\]
Note that passage to the limit involves interchanging the (possibly infinite) sum defining $\nu P^n$ and the limit of the sequence $P^n$. This is justified by dominated convergence and the fact that each entry of $P^n$ is bounded above by~1. Associativity is a similar consideration.  
\end{proof}

\bigskip
\section{Analytically stable augmentations of vertex sets}
\label{Sec: Stable augmentations}

	The goal of this section is to prove that (under suitable hypotheses) a vertex set in the Berkovich projective line can be enlarged to yield one that is analytically stable with respect to a given rational function. Some of the tools that we use require the residue characteristic of our field to be zero; however, much of our technique and the theory on which it is built carries over to dynamical systems over $\CC_p$ (see Remark~\ref{Rem: Tame extension}). 
		
\begin{convention}	
Throughout this section, we let $\ell$ be a discretely valued field with residue characteristic zero and completion $\hat \ell$, and we let $k$ be the completion of an algebraic closure of $\hat \ell$.  We write $\Berk = \Berk_k$. For $a \in k$ and $r \in \RR_{>0}$, we write $\disk(a,r)^-$ and $\disk(a,r)$ for the open and closed Berkovich disks centered at $a$ of radius $r$, respectively. 
\end{convention}

\begin{remark}
	For our application to rational maps on fibered complex surfaces, we will take $\ell$ to be the field of complex functions that are meromorphic at the origin in $\CC$. Then $\hat \ell = \CCt$ is the field of formal Laurent series in a local coordinate $t$, and $k = \LL$ is the completion of the field of formal Puiseux series. 
\end{remark}

A type~II point $\zeta$ is canonically associated with a norm on the function field $k(z)$ of $\PP^1_k$.  Following Berkovich, we write $|f(\zeta)|$ for the corresponding norm of  $f \in k(z)$; it is valued in $|k^\times|$. More concretely, there is a (classical) closed  disk $D(a,r) = \{x \in k : |x - a| \leq r\}$ with $a \in k$ and $r \in |k^\times|$ such that $|f(\zeta)| = \sup_{x \in D(a,r)} |f(x)|$ for any polynomial $f \in k[z]$. In this case, we write $\zeta = \zeta_{a,r}$. 

\begin{define}
\label{Def: ell-split}
	We say that a type~II point $\zeta \in \Berk$ is \textbf{$\ell$-split} if it is of the form $\zeta = \zeta_{a,r}$ for some $a \in \ell$ and $r \in |\ell^\times|$ --- i.e., if it is the supremum norm associated to an $\ell$-rational closed disk $\{x \in k \ : \ |x - a| \leq r\}$. A vertex set $\Gamma$ is \textbf{$\ell$-split}  if each of its elements is $\ell$-split. 
\end{define}

\begin{remark}
	By a density argument, a type~II point is $\ell$-split if and only if it is $\hat \ell$-split.
\end{remark}

The following criterion for a vertex to be $\ell$-split will be used in our application in \S\ref{Sec: Resolutions of surfaces}. 

\begin{prop}
\label{Prop: ell-split criterion}
Suppose that $\ell$ has algebraically closed residue field. Then a vertex $\zeta$ is $\ell$-split if and only if $|f(\zeta)| \in |\ell|$ for every rational function $f \in \ell(z)$. 
\end{prop}

\begin{proof}
Suppose that $\zeta$ is $\ell$-split, and write $\zeta = \zeta_{a,r}$ with $a \in \ell$ and $r \in |\ell^\times|$. By multiplicativity, it suffices to show $|f(\zeta_{a,r})| \in |\ell|$ for every polynomial $f \in \ell[z]$. Since $a \in \ell$, we can write $f = \sum c_n(z-a)^n$ for some coefficients $c_n \in \ell$. Then 
$|f(\zeta_{a,r})| = \max |c_n| r^n \in |\ell|$. 

Conversely, suppose that $|f(\zeta)| \in |\ell|$ for all $f \in \ell(z)$. Write $\zeta = \zeta_{a,r}$ for some $a \in k$ and $r \in |k^\times|$. We claim that the classical disk $D(a,r)$ contains an element of $\ell$. Let us assume otherwise, i.e., that $D(a,r) \cap \ell = \varnothing$, and derive a contradiction. 

For $b \in \ell$, set $f(z) = z - b = (a-b) + (z-a)$. Then $|f(\zeta_{a,r})| = \max(|a-b|, r) = |a-b|$. As we are assuming $|f(\zeta_{a,r})| \in |\ell|$, and since $b \not\in D(a,r)$, we conclude that 
\[
|a-b| \in |\ell^\times| \text{ for all $b \in \ell$}.
\]
Let $t$ be a uniformizer for $\ell$. Taking $b = 0$, we see that $|a| = |t^N|$ for some $N \in \ZZ$. 
We now claim that for any $n \geq N$, there is $b_n \in \ell$ such that $|a - b_n| < |t^n|$. 
We proceed by induction on $n$. Since the residue field of $\ell$ is algebraically closed, there is $c_N \in \ell$ 
such that $|a/t^N - c_N| < 1$. This is equivalent to $|a - c_N t^N| < |t^N|$; setting $b_N = c_N t^N$ 
starts the induction. Assume now that $|a - b_n| < |t^n|$ for some $b_n \in \ell$ and some $n \geq N$.
Since $|a - b_n| \in |\ell^\times|$, there is $m  > n$ such that $|a - b_n| = |t^m|$. Let $c_m \in \ell$ be 
such that $\left|\frac{a - b_n}{t^m} - c_m\right| < 1$. Setting $b_{n+1} = b_n + c_m t^m$, we find that 
$|a - b_{n+1}| < |t^m| \leq |t^{n+1}|$, which completes the induction step. Taking $n$ sufficiently large 
yields an element $b_n \in \ell$ such that $|a - b_n| < |t^n| < r$, which contradicts our assumption 
that $D(a,r) \cap \ell = \varnothing$. 

We have now shown that the classical disk $D(a,r)$ contains an element of $\ell$, so we may assume without loss that $a \in \ell$. Set $f(z) = z - a$ to get $|f(\zeta_{a,r})| = r \in |\ell|^\times$, which means $\zeta$ is $\ell$-split. 
\end{proof}

	We restate Theorem~D using the conventions and terminology of this section.

\begin{thmD}
	Let $f \in \ell(z)$ be a rational function of degree $d \geq 2$, and let $\Gamma$ be a vertex set in $\Berk$. Suppose that $\Gamma$ is $\ell$-split. Then there exists an $\ell$-split vertex set $\Gamma'$ containing $\Gamma$ such that $(f, \Gamma')$ is analytically stable. 
\end{thmD}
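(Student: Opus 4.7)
The plan is to enlarge $\Gamma$ in two stages, using the dynamical tools developed in Section~\ref{Sec: Stable augmentations}: Proposition~\ref{Prop: Preperiodic Julia points} (preperiodicity of type~II Julia points) and Rivera-Letelier's classification of Fatou components combined with Benedetto's No Wandering Domains theorem, all of which require the residue characteristic zero hypothesis on $\ell$.

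First I would close $\Gamma$ under forward iteration on its Julia-point vertices. For each $\zeta \in \Gamma \cap \julia(f)$, Proposition~\ref{Prop: Preperiodic Julia points} implies that $\zeta$ is preperiodic, so the forward orbit of $\zeta$ under $f$ is a finite set of type~II Julia points, each of which is again $\ell$-split since $f \in \ell(z)$ maps $\ell$-split type~II points to $\ell$-split type~II points. Adjoining these finite orbits produces an $\ell$-split vertex set $\Gamma_1 \supset \Gamma$ with the property that $f(\zeta) \in \Gamma_1$ for every $\zeta \in \Gamma_1 \cap \julia(f)$.

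Next I would address each Fatou-point vertex $\zeta \in \Gamma_1 \cap \fatou(f)$ whose image $f(\zeta)$ lies outside $\Gamma_1$, with the aim of ensuring that the $\Gamma'$-domain containing $f(\zeta)$ becomes an $F$-domain. Let $V_\zeta$ be the Fatou component containing $f(\zeta)$. Benedetto's No Wandering Domains theorem forces $V_\zeta$ to be preperiodic, so its forward orbit eventually enters a periodic cycle of Fatou components $V^{(0)} \to V^{(1)} \to \cdots \to V^{(p-1)} \to V^{(0)}$. Invoking Rivera-Letelier's classification, each periodic component in this cycle is either an attracting basin of an attracting type~I or type~II cycle or a Rivera (indifferent) domain, and in either case one can produce a small $\ell$-split open disk $D^{(i)}$ inside each $V^{(i)}$ satisfying $f(D^{(i)}) \subseteq D^{(i+1)}$ (indices modulo $p$), whose cyclic union eventually absorbs the orbit of $f(\zeta)$. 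Tracking $f(\zeta)$ through its preperiodic tail and adjoining a finite chain of $\ell$-split type~II boundary vertices that carve out successive $\Gamma'$-disks containing the iterates $f^n(\zeta)$ produces an augmentation $\Gamma'$ in which the $\Gamma'$-domain of $f(\zeta)$ is trapped in the cycle $\{D^{(i)}\}$ and is therefore an $F$-disk; the newly adjoined boundary vertices are analytically stable as well, since each maps into the next disk of the chain.

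The main obstacle is the coordination required in Stage~2: one must handle finitely many images $f(\zeta)$ that may land in a common periodic Fatou cycle, choose forward-invariant disks whose orbits mesh consistently with the augmented vertex set, and accomplish all of this using only $\ell$-split type~II vertices. The $\ell$-split requirement is the most delicate piece; I would handle it by exploiting that the attracting or indifferent periodic cycles of $f \in \ell(z)$ are algebraic over $\ell$, combined with a Krasner-type approximation that uses the discreteness of $|\ell^\times|$ to produce disks with $\ell$-rational centers and radii in $|\ell^\times|$ arbitrarily close to the ideal invariant disks.
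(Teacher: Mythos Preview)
Your Stage~1 matches the paper's first step. The gaps are in Stage~2, where two of your structural assumptions fail.

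First, Benedetto's result (Proposition~\ref{Prop: Benedetto Wandering}) does \emph{not} exclude wandering Fatou components. It says only that a wandering component eventually becomes a disk whose type~II boundary point is periodic. So the sentence ``Benedetto's No Wandering Domains theorem forces $V_\zeta$ to be preperiodic'' is incorrect, and your case split omits the wandering case entirely. The paper treats wandering components separately (Step~2 of the proof of Theorem~\ref{refined Theorem D}): one follows the orbit of $\zeta$ far enough that it lands in such a disk $V$, adjoins the finite orbit of $\partial V$ to $\Gamma$, and observes that the iterates $f^m(V)$ are wandering $F$-disks.

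Second, your treatment of Rivera domains cannot work as stated. A Rivera domain is, by definition, one on which the return map is an \emph{automorphism}; there is no forward-invariant cycle of disks $D^{(i)}$ with $f(D^{(i)})\subseteq D^{(i+1)}$ that ``eventually absorbs'' arbitrary orbits, because an automorphism of a disk is surjective and orbits of non-periodic points do not converge. The paper's mechanism is quite different (Step~4 and Lemma~\ref{Lem: Rivera periodic locus}): the periodic locus $P\subset U$ is closed and connected, so $U\smallsetminus P$ is a disjoint union of open disks, each of which is \emph{wandering} inside $U$ but has periodic $\ell$-split boundary. One pushes each Fatou vertex forward until it lands in such a disk, then argues exactly as in the wandering-component case. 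Your attracting-basin case is essentially right, and the $\ell$-split bookkeeping you describe is close to what the paper does; but without the correct handling of wandering components and Rivera domains the argument does not close.
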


\begin{remark}
	For each vertex set $\Gamma \subset \Berk_k$, there is a finite (discretely valued) extension $\ell' / \ell$ such that 
	$\Gamma$ is $\ell'$-split. The equivalence of this statement of Theorem~D with the one in the introduction follows 	immediately from this observation.
\end{remark}

	The proof is given in \S\ref{Sec: Stable proof} after recalling several definitions and preliminary results. The overall strategy is to use Rivera-Letelier's classification of Fatou components  to organize the augmentation of $\Gamma$. If a vertex $\zeta$ lies in the Julia set for $f$, then $\zeta$ must be preperiodic, and we may append its forward orbit to $\Gamma$. If a vertex $\zeta$ lies in the Fatou set for $f$, then we augment $\Gamma$ with elements of the forward orbit of $\zeta$ along with several other carefully chosen points, depending on the type of Fatou component to which $\zeta$ belongs. We refer the reader to \cite[\S10]{Baker-Rumely_BerkBook_2010} for background on non-Archimedean Fatou/Julia theory. 
	
	
	
	
	


\subsection{Periodic Fatou components and Julia points}
\label{Sec: Fatou background}

	Versions of the next two results were proved by Rivera-Letelier for rational functions over $\CC_p$ \cite[\S4,5]{Rivera-Letelier_Asterisque_2003}. Some parts of the proofs carry over verbatim to the case of residue characteristic zero; additional work was done by Kiwi for the parts that do not (see Appendix~\ref{Sec: Appendix}). 

	A rational function $f \in k(z)$ acts on $\Berk$ by an open map. Therefore, the image of a Fatou component is again a Fatou component. Moreover, the Fatou set of $f$ agrees with the Fatou set of any iterate $f^n$. So to classify periodic Fatou components for $f$, it will suffice to study the fixed ones. 
	
	As in the complex setting, if $U$ is a Fatou component such that $f^n(U) = U$, and if $U$ contains an attracting periodic point $p$, then we say that $U$ is the \textbf{immediate basin of attraction} for $p$. Following Kiwi, we say that a periodic Fatou component of period~$n$ is a \textbf{Rivera domain} if $f^n$ induces a bijection of $U$ onto itself. 

\begin{prop}[Rivera-Letelier, Appendix~\ref{Sec: Appendix}]
\label{Prop: Classification}
	Let $k$ be an algebraically closed field of characteristic zero that is complete with respect to a nontrivial non-Archimedean absolute value. Let $f \in k(z)$ be a tame rational function\footnote{A rational function is \textbf{tame} is its Berkovich ramification locus is contained inside the convex hull of its critical points. Every rational function over a field with residue characteristic zero is tame.} of degree $d \geq 2$, and let $U$ be a fixed Fatou component for $f$.  Exactly one of the following holds:
\begin{enumerate}
\item $U$ is the immediate basin of attraction for a type~I fixed point, or
\item $U$ is a fixed Rivera domain whose boundary is a union of at most $d-1$ type~II periodic orbits. 
\end{enumerate}
\end{prop}

\begin{remark}
A fixed Rivera domain is a simple domain by the second part of the theorem. 
\end{remark}

	The next proposition is the ``No Wandering Domains'' result that was alluded to in the Introduction. Note that $f$ must be defined over the discretely valued subfield $\ell \subset k$. 
	

\begin{prop}[Benedetto, {\cite[Thm.~5.1]{Benedetto_wandering_nontriv_reduction}}]
\label{Prop: Benedetto Wandering}
Let $\ell$ be a discretely valued field and let $f \in \ell(z)$ be a rational function of degree $d \geq 2$. If $U$ is a wandering Fatou component, then $f^n(U)$ is a disk with periodic type~II boundary point for all $n \gg 0$. Moreover, if $U$ contains an element of $\PP^1(\ell)$, then the boundary point of $f^n(U)$ is $\ell$-split. 
\end{prop}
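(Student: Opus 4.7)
The plan is to invoke Benedetto's No Wandering Domains theorem \cite{Benedetto_wandering_nontriv_reduction} for the structural portion of the statement, then use a combinatorial argument about Berkovich tangent spaces to force the type~II conclusion, and finally Galois descent for the $\ell$-split refinement.

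First, I would apply Benedetto's theorem to $f \in \ell(z)$. Since $\ell$ is discretely valued with residue characteristic zero, $f$ is automatically tame, and Benedetto's theorem yields an integer $N \geq 0$ such that $U_n := f^n(U)$ is an open disk for all $n \geq N$, with a unique boundary point $\zeta_n \in \Berk$ that is eventually periodic under $f$. After replacing $U$ by $f^N(U)$, we may assume $\zeta_n$ is periodic of some period $q \geq 1$; note that $\zeta_{n+q} = \zeta_n$, since $f(\zeta_n) = \zeta_{n+1}$ (from $f(U_n) = U_{n+1}$ combined with the containment $f(\partial U_n) \supseteq \partial f(U_n)$ for open Fatou components).

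To exclude type~III, observe that by periodicity the pairwise distinct wandering disks $U_n, U_{n+q}, U_{n+2q}, \ldots$ all share $\zeta_n$ as their unique boundary point. A type~III point of $\Berk$ has only two tangent directions, hence only two open disks in $\Berk$ having it as boundary --- a contradiction to the existence of infinitely many such disks. So $\zeta_n$ must be of type~II.

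Finally, for the $\ell$-split claim, suppose $p \in U \cap \PP^1(\ell)$; then $f^n(p) \in U_n \cap \PP^1(\ell)$ for all $n$, since $f \in \ell(z)$ preserves $\PP^1(\ell)$. The Galois group $G := \Gal(\bar \ell / \ell)$ acts continuously on $\Berk = \Berk_k \cong \Berk_{\hat \ell}$ via its action on $k$, sending Fatou components of $f$ to Fatou components of $f$. Since $f^n(p)$ is Galois-fixed and distinct Fatou components are pairwise disjoint, $G$ stabilizes $U_n$ setwise and therefore fixes its unique boundary point $\zeta_n$. Thus $\zeta_n$ is defined over $\hat\ell$, and by the paper's observation that a type~II point is $\ell$-split if and only if it is $\hat\ell$-split, $\zeta_n$ is $\ell$-split. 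The main obstacle will be the careful invocation of Benedetto's theorem in the precise residue-characteristic-zero form needed here; once that input is in hand, the type~II and $\ell$-split refinements follow cleanly from the combinatorial and Galois-theoretic arguments above.
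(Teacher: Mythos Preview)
The paper does not give a proof of this proposition; it is stated as a result of Benedetto and used as a black box. So there is no paper proof to compare against directly.

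That said, your outline is largely sound for the first two parts. The appeal to Benedetto's theorem yields the disk structure with eventually periodic boundary, and your tangent-space count (infinitely many distinct disks $U_n, U_{n+q}, U_{n+2q}, \ldots$ sharing $\zeta_n$, versus only two directions at a type~III point) correctly rules out type~III.

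The gap is in your $\ell$-split step. Galois invariance of $\zeta_n$ does not by itself force $\ell$-splitness: the point $\zeta_{0,|t|^{1/2}} \in \Berk_\LL$ is fixed by $\Gal(\bar\ell/\ell)$ (its center $0$ lies in $\ell$, and Galois acts trivially on absolute values), yet its radius $|t|^{1/2}$ is not in $|\ell^\times|$, so it is not $\ell$-split. In other words, a Galois-fixed type~II point of $\Berk_k$ may descend to a type~III point of $\Berk_{\hat\ell}$. Your argument pins down the center but says nothing about the radius.

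What you need is a direct argument that the radius lies in $|\ell^\times|$, and the paper in fact carries out exactly such an argument in Step~4 of the proof of Theorem~\ref{refined Theorem D} (there for disks in Rivera domains, but the mechanism is identical). Since $U_n$ and $U_{n+q}$ are distinct open disks sharing the boundary point $\zeta_n$, and since $f^n(p) \in U_n$ and $f^{n+q}(p) \in U_{n+q}$ are both $\ell$-rational, after a coordinate change placing everything in a bounded disk one has $\zeta_n = \zeta_{f^n(p),r}$ with $r = |f^n(p) - f^{n+q}(p)| \in |\ell^\times|$. This gives $\ell$-splitness directly, without Galois descent.
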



	The following result is a very powerful consequence of the hypothesis that $f$ is defined over a discretely valued subfield. (Compare Example~\ref{Ex: Tent map}.)
	
\begin{prop}
\label{Prop: Preperiodic Julia points}
	Let $f \in \ell(z)$ be a rational function of degree $d \geq 2$. Then every type~II point of the Julia set of $f$ is preperiodic. 
\end{prop}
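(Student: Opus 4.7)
The plan is to show that any type~II Julia point $\zeta$ lies on the boundary of some Fatou component $U$, and then to use Benedetto's No Wandering Domains theorem together with the classification of periodic Fatou components to force a forward iterate $f^N(\zeta)$ to be a periodic type~II point. The hypothesis that $f$ is defined over the discretely valued field $\ell$ enters precisely through Proposition~\ref{Prop: Benedetto Wandering}.

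First I would handle the degenerate case in which $\julia(f)$ has nonempty interior: by \cite[Cor.~10.47]{Baker-Rumely_BerkBook_2010} this happens only when $f$ has simple reduction, so $\julia(f) = \{\zeta_0\}$ is a single fixed type~II point and the claim is immediate. In all other cases $\fatou(f)$ is open and dense in $\Berk$, and so $\zeta \in \partial U$ for some Fatou component $U$. By continuity of each iterate $f^n$ and forward invariance of $\julia(f)$, one has $f^n(\zeta) \in \partial(f^n U)$ for every $n \geq 0$, since $f^n(\zeta)$ is a Julia point while $f^n U$ is an open Fatou component.

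Next I split according to whether the orbit $\{f^n U\}_{n \geq 0}$ of Fatou components is wandering or eventually periodic. If it is wandering, Proposition~\ref{Prop: Benedetto Wandering} gives that $f^n U$ is a Berkovich open disk with a single periodic type~II boundary point for all $n \gg 0$; hence $f^n(\zeta)$ must equal that boundary point and is periodic. If the orbit is eventually periodic, say $f^N U$ is a periodic Fatou component, then by Proposition~\ref{Prop: Classification} it is either a Rivera domain---in which case the following structural proposition realizes $\partial(f^N U)$ as a finite union of repelling periodic type~II orbits, so $f^N(\zeta)$ is periodic---or an immediate basin of attraction of a type~I periodic point, whose boundary in residue characteristic zero is a single periodic type~II point by the Rivera-Letelier/Kiwi structure analysis; again $f^N(\zeta)$ must equal that point. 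In either situation $\zeta$ is preperiodic.

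The main obstacle is the immediate basin subcase: Proposition~\ref{Prop: Classification} as stated identifies only the type of component and does not spell out its boundary geometry. I would either quote the relevant structure results from \cite{Rivera-Letelier_Asterisque_2003} and \cite{Kiwi_Fatou_Components}, or else give a short self-contained argument that the boundary of any periodic Fatou component is a finite $f$-invariant set of type~II points in $\julia(f)$, from which periodicity of each boundary point---and hence preperiodicity of $\zeta$---is automatic.
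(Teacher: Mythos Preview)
Your overall strategy is reasonable, but there is a real gap at the sentence ``In all other cases $\fatou(f)$ is open and dense in $\Berk$, and so $\zeta \in \partial U$ for some Fatou component $U$.'' Two problems: (i) \cite[Cor.~10.47]{Baker-Rumely_BerkBook_2010} is about when $\mu_f$ charges a point, not about when $\julia(f)$ has nonempty interior, so your opening dichotomy is not justified as written; (ii) more seriously, even if the Fatou set were dense, it would not follow that $\zeta$ lies on the boundary of a \emph{single} component---already on $\RR$, the open set $\bigcup_{n\ge 1}(1/(n+1),1/n)$ has $0$ in its closure while no individual component does. What you actually need is a tangent direction $\vec v$ at $\zeta$ with $D(\vec v)\cap\julia(f)=\varnothing$, and that is not automatic from density.

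The paper solves exactly this problem by a different device: it base-changes to an extension $\ell'/\ell$ with trivial value-group extension but transcendental residue extension, producing $k' \supsetneq k$ with $\tilde k' \supsetneq \tilde k$. Under the canonical embedding $\iota:\Berk_k\hookrightarrow\Berk_{k'}$ the Julia sets are identified, but $\zeta$ now acquires \emph{new} tangent directions $\vec v\in T\Berk_{k',\zeta}\smallsetminus T\Berk_{k,\zeta}$, and for any such $\vec v$ the disk $D(\vec v)$ is disjoint from $\iota(\Berk_k)$, hence from the Julia set. This manufactures a Fatou component that is a \emph{disk} with sole boundary point $\zeta$. The payoff is that your periodic-component case analysis disappears entirely: if the disk is preperiodic then so is its unique boundary point $\zeta$, and if it wanders then Proposition~\ref{Prop: Benedetto Wandering} (applied over the discretely valued $\ell'$) finishes. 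In particular the ``main obstacle'' you flag---the boundary structure of an immediate basin---never arises.

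Your route can be salvaged without base change by observing that $\julia(f)$ is the closure of a countable set (e.g., the backward orbit of any non-exceptional type~I point), so at most countably many of the uncountably many directions at the type~II point $\zeta$ can meet $\julia(f)$; hence a Fatou disk $D(\vec v)$ with boundary $\{\zeta\}$ exists over $k$ already. Once you have a disk rather than an arbitrary component, the argument collapses to the paper's two-line finish and the Rivera/basin discussion becomes unnecessary.
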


\begin{proof} 
	Without loss, we may assume that $\ell$ is complete. 
	Consider a complete extension $\ell'/ \ell$ with transcendental residue extension and trivial value group extension. (For example, let $\| \cdot \|$ be the Gauss norm on the Tate algebra $\ell\langle\tau \rangle$ in the variable $\tau$; write $\ell'$ for its fraction field, which is a complete non-Archimedean field with the same value group as $\ell$ and residue field $\tilde \ell' = \tilde \ell (\tau)$, a rational function field.) 
	
	Let $k'$ be a minimal complete and algebraically closed non-Archimedean field containing both $k$ and $\ell'$. It is isomorphic to the completion of an algebraic closure of $\ell'$. Let $\iota: \Berk_k \hookrightarrow \Berk_{k'}$ be the natural continuous embedding. (See \cite[\S4]{Faber_Berk_RamI_2013}.) Writing $f_{k'}$ for the induced map on $\Berk_{k'}$, we have that $\iota \circ f = f_{k'} \circ \iota$. In particular, $\iota$ induces an identification of Julia sets $\julia(f_{k'}) = \iota\left(\julia(f)\right)$. For ease of notation, we drop the use of the embedding map $\iota$ and view $\Berk_k$ as a closed subset of $\Berk_{k'}$. 
	
	Let $\zeta \in \julia(f)$ be a type~II point. Since $\tilde k \subsetneq \tilde k'$, there is a tangent direction $\vec{v} \in T\Berk_{k', \zeta} \smallsetminus T\Berk_{k, \zeta}$ such that the associated open disk $D(\vec{v})$ does not meet the Julia set. That is, $D(\vec{v})$ is a Fatou component. If it is a preperiodic component, then evidently its boundary point $\zeta$ is also preperiodic. Otherwise, $D(\vec{v})$ is a wandering component, and Proposition~\ref{Prop: Benedetto Wandering} applies since $\ell'$ is discretely valued. 
\end{proof}

	The next result will allow us to control the orbits of vertices in Rivera domains. 
	
\begin{lem}
\label{Lem: Rivera periodic locus}
	Let $f \in k(z)$ be a rational function of degree $d \geq 2$, and let $U$ be a Rivera domain for $f$. Then the set of periodic points in $U$ is closed and connected (in $U$).
\end{lem}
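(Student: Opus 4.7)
My plan is to reduce to the case where $U$ is $f$-fixed and then exploit the isometric nature of $f|_U$. By replacing $f$ with its iterate $f^N$, where $N$ is the period of $U$, one may assume $U$ is fixed without altering the set of periodic points in $U$. Under this reduction $f|_U$ is an automorphism, so $f$ is injective on $U$. Because ramification at an interior type~II point of $U$ would force two disks in $U$ to collapse onto a single image disk, injectivity forces $m_f(\zeta) = 1$ at every $\zeta \in U$. Consequently $f$ restricts to an isometry of $U \cap \mathbf{H}$ in the standard Berkovich path metric, and each iterate $f^m|_U$ is likewise an isometry of this metric tree.

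For each $m \geq 1$, standard facts about fixed sets of isometries of $\mathbb{R}$-trees imply that $\Fix(f^m) \cap U$ is closed in $U$ and, when nonempty, connected. The set of periodic points in $U$ is the union $\bigcup_{m \geq 1} \Fix(f^m) \cap U$. This union is connected: for any two periodic points of periods $a$ and $b$, both lie in the connected set $\Fix(f^{\operatorname{lcm}(a,b)}) \cap U$, and more generally any finite subfamily admits a common upper bound obtained by taking a least common multiple of the periods involved.

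The remaining --- and hardest --- step is closedness of the periodic set inside $U$. If $x_n \to x$ in $U$ with $x_n$ of period $p_n$, and $\{p_n\}$ is bounded, passing to a constant-period subsequence and using continuity of $f^p$ gives $f^p(x) = x$, so $x$ is periodic. The subtle case is when $\{p_n\}$ is unbounded; here I plan to combine the equicontinuity of $\{f^n|_U\}$ (valid because $U$ lies in the Fatou set) with the structural description of fixed Rivera domains due to Rivera-Letelier and Kiwi in residue characteristic zero. That description exhibits the periodic points as sitting on a canonical closed ``core'' subtree of $U$ on which $f$ acts by a tree automorphism with controlled dynamics, so that a limit of periodic points along this subtree must itself be periodic. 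Extracting closedness from this fine structure is the principal technical obstacle.
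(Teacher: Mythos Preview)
Your reduction to the fixed case and your connectedness argument are essentially correct and match the paper's approach in spirit: the paper also shows that if $x$ is periodic of period $n$, then the whole segment from $x$ to the boundary is fixed by $f^n$ (using that $f^n$ is unramified there), which is the same convexity-of-fixed-sets idea you invoke via tree isometries.

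The genuine gap is exactly where you flag it: closedness when the periods $p_n$ are unbounded. Your appeal to equicontinuity and an unspecified ``core subtree'' is not a proof, and in fact the paper does not argue this way. The missing idea is a \emph{finiteness of periods}: the paper shows that, after reducing to a single open disk $D \subset U$ (a component of $U$ minus the skeleton of $\bar U$) with fixed boundary point $\eta$, the set of periods that can occur for periodic points in $\bar D$ has at most two elements. Once you know this, the periodic locus is a finite union of the closed sets $\Fix(f^m)$, and closedness is immediate.

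The mechanism for bounding the periods is an explicit local computation, and this is where the residue characteristic zero hypothesis enters. In coordinates with $D = \disk(0,1)^-$, one has $f(z) = a_0 + a_1 z + \cdots$ with $|a_1| = 1$; if $x_1 \in [y,\eta]$ is the closest fixed point to a periodic $y$, one computes that the induced action of $f$ on $T\Berk_{x_1}$ is affine of the form $z \mapsto \beta + \lambda z$ with $\lambda$ the residue of $a_1$. Since $y$ lies in a periodic non-fixed direction at $x_1$, $\lambda$ must be a nontrivial root of unity, say of order $e$. One then checks (again using $\mathrm{char}(\tilde k) = 0$, so that $z \mapsto z + \beta'$ has no nontrivial periodic orbits) that $y$ has period exactly $e$. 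Hence only the periods $1$ and $e$ occur in $\bar D$. This concrete bound is the step your proposal lacks; equicontinuity alone will not produce it.
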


\begin{proof}
	Replacing $f$ with an iterate if necessary, it suffices to assume that $U$ is fixed. Write $\Sigma(\bar U)$ for the skeleton of $\bar U$ --- i.e., the connected hull of the boundary of the closure of $U$. As $\partial U$ is a finite nonempty set and $f$ acts on $U$ by an automorphism, the skeleton $\Sigma(\bar U)$ is fixed pointwise by some iterate of $f$. Without loss, we may assume that $\Sigma(\bar U)$ is itself fixed pointwise. Note that the connected components of $U \smallsetminus \Sigma(\bar U)$ are open disks. To prove the lemma, it suffices to show that for each such disk $D$, the periodic locus in $\bar D$ is closed and connected. For then the complement of the periodic locus in $U$ is a collection of disjoint open disks. 
	
	Suppose that $D$ is a connected component of $U \smallsetminus \Sigma(\bar U)$. Note that the boundary point of $D$ is fixed. If $D$ is not periodic, then the periodic locus in $\bar D$ is simply its boundary point. If $D$ is periodic, then we may assume without loss that it is fixed by $f$. Let $\eta$ be the boundary point of $D$. Observe that if $x \in D$ is any periodic point, say with period $n$, then the entire segment $[x,\eta]$ must be fixed by $f^n$. For $f^n$ is unramified along $[x,\eta)$; hence, $f^n([x,\eta])$ and $[x,\eta]$ have the same length and the same endpoints, and so they must agree pointwise. It follows that the set of periodic points in $D$ is connected. 
		
	To show that the periodic locus in $\bar D$ is closed, it suffices to show that there are only finitely many periods that can occur for a periodic point in $D$. Indeed, the set of points in $\bar D$ with period dividing a given integer~$n$ is closed, being the solutions to the equation $f^n(z) = z$. In fact, we will show something stronger: the set of periods that can occur for a periodic point in $\bar D$ contains at most two elements. Make a change of coordinate so that $D = \disk(0,1)^-$, in which case $\eta$ is the Gauss point. Since $f$ is an automorphism of $\disk(0,1)^-$, we may expand $f$ as 
	\begin{equation}
	\label{Eq: Form of f}
		f(z) = a_0 + a_1z + a_2 z^2 + \cdots, 
	\end{equation}
where $|a_i| \leq 1$ for all $i$, $|a_0| < 1$, and $|a_1| = 1$. Let $\lambda$ be the image of $a_1$ in the residue field $\tilde k$. If $\lambda$ is not a root of unity, we will show that $1$ is the only possible period for a periodic point in $\bar D$. If $\lambda$ is an $e^{\mathrm{th}}$ root of unity, then $\{1, e\}$ are possible periods. 
	
	Let $y \in D$ be a periodic point of period $n \geq 1$. Let $x_1 \in [y, \eta]$ be the closest fixed point to $y$. Note that $x_1$ is of type~II and $x_1 \neq \eta$ because the tangent vector $\vec{0} \in T\Berk_{\eta}$ pointing toward~$0$ is fixed by $f$. If $y = x_1$, then $n = 1$, and we are finished. Otherwise, the direction $\vec{v} \in T\Berk_{x_1}$ containing $y$ is periodic and non-fixed. 
	
	Writing $x_1 = \zeta_{b,|\rho|}$ for some $b, \rho \in k \cap \disk(0,1)^-$, we make a change of coordinate in \eqref{Eq: Form of f} to obtain the action of $f$ on $T\Berk_{x_1}$:
	\[
		\rho^{-1}f(b+ \rho z) - b \rho^{-1} = \frac{f(b) - b}{\rho} + a_1 z + \varepsilon(z),
	\]
where $\varepsilon(z)$ is a series whose coefficients all have absolute value strictly smaller than~1. Since $x_1$ is fixed, we find that $|f(b) - b| \leq \rho$. Let $\beta$ be the image of $\rho^{-1}(f(b) - b)$ in the residue field of $k$. Reducing the above expression modulo the maximal ideal of $k^{\circ}$ shows that, in appropriate coordinates, the action of $f$ on $T\Berk_{x_1}$ is given by 
	\[
		z \mapsto \beta + \lambda z.
	\]	 
Since there exists a periodic non-fixed direction at $x_1$, and since $\mathrm{char}(\tilde k) = 0$, $\lambda$ must be a nontrivial root of unity. Let $e > 1$ be the multiplicative order of $\lambda$. 

	We claim that $y$ has period~$e$. For if not, then let $x_e$ be the point closest to $y$ that is fixed by $f^e$. Then $x_e$ is of type~II, and $x_e \neq x_1$ since $f^e$ fixes the direction $\vec{v} \in T\Berk_{x_1}$ pointing toward $y$. We know $y$ lies in a periodic non-fixed direction at $x_e$. But $f^e$ acts on $T\Berk_{x_e}$ by $z \mapsto z + \beta'$ for some $\beta' \in \tilde k$ by a computation analogous to the one in the previous paragraph. This tangent map has no periodic non-fixed direction since $\mathrm{char}(\tilde k) = 0$. Hence $y  = x_e$ and $y$ has period~$e$. 
\end{proof}


\subsection{Existence of analytically stable augmentations}
\label{Sec: Stable proof}

	The goal of this section is to prove the following more precise version of Theorem~D. Recall that any $F$-domain $U$ has the property that $f(U)$ is contained in some $F$-domain (Lemma~\ref{Lem: F-domains}). The \textbf{forward orbit} of $U$ is the set of $F$-domains $V$ such that $f^n(U) \subset V$ for some $n \geq 0$.  We say that an $F$-domain  is \textbf{wandering} if it has infinite forward orbit.

\begin{thm}  \label{refined Theorem D}
	Let $f \in \ell(z)$ be a rational function of degree $d \geq 2$, and let $\Gamma$ be an $\ell$-split vertex set in $\Berk$. 
	There exists an $\ell$-split vertex set $\Gamma' \supset \Gamma$ such that for each $\zeta \in \Gamma'$, exactly one of the following is true: 
		\begin{itemize}
			\item $f(\zeta) \in \Gamma'$; 
			\item $f(\zeta)$ lies in a wandering $F$-disk (relative to $\Gamma'$) with periodic boundary point in $\Gamma'$; or
			\item $f(\zeta)$ lies in an $F$-disk (relative to $\Gamma'$) that contains an attracting type~I periodic point. 			
		\end{itemize}			
	In particular, $(f, \Gamma')$ is analytically stable. 
\end{thm}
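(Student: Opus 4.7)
The plan is to construct $\Gamma'$ by augmenting $\Gamma$ vertex by vertex, using the Fatou component classification (Proposition~\ref{Prop: Classification}), Benedetto's wandering domains result (Proposition~\ref{Prop: Benedetto Wandering}), the preperiodicity of type~II Julia points (Proposition~\ref{Prop: Preperiodic Julia points}), and Lemma~\ref{Lem: Rivera periodic locus}. Since $\Gamma$ is finite, I would treat each vertex separately and union the augmentations. Forward iteration by $f \in \ell(z)$ preserves $\ell$-splitness of type~II points, which keeps the construction in the $\ell$-split category.

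For a Julia vertex $\zeta$, Proposition~\ref{Prop: Preperiodic Julia points} gives that its forward orbit is finite; append it to obtain condition (i) of the theorem. For a Fatou vertex $\zeta$ with Fatou component $U$, examine the orbit $(f^n(U))$. If it is wandering, Proposition~\ref{Prop: Benedetto Wandering} provides some $N$ after which $f^N(U)$ is a disk bounded by a periodic $\ell$-split type~II point $\eta$; append $\eta$ together with its forward orbit and the iterates $\zeta, \ldots, f^{N-1}(\zeta)$ so that $f^N(\zeta)$ lies in the wandering $F$-disk $f^N(U)$, giving condition (ii). If the orbit eventually enters a cycle of immediate basins of attraction around a type~I attracting periodic point $p$, pick an $\ell$-split type~II point bounding a forward-invariant disk around $p$ (under the appropriate iterate of $f$) and append it, its forward orbit, and the initial iterates of $\zeta$; this yields condition (iii).

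The Rivera case will be the main obstacle. Here the orbit enters a cycle of Rivera domains with first entry $W = f^m(U)$; append the boundaries of each Rivera domain in the cycle (finite sets of periodic type~II points) together with the iterates $\zeta, \ldots, f^{m-1}(\zeta)$. Setting $\xi = f^m(\zeta) \in W$, Lemma~\ref{Lem: Rivera periodic locus} says that the periodic locus in $W$ is closed and connected, so either $\xi$ is eventually periodic under the first-return map $f^p$ (append its finite orbit, condition (i)) or $\xi$ lies in an open disk $D$ that is a connected component of the non-periodic locus. Since $f$ permutes such disks, $D$ is either wandering under $f$ (append $\partial D$ and its periodic orbit to produce a wandering $F$-disk containing $\xi$, condition (ii)) or periodic, in which case one descends to the automorphism of $D$ induced by an iterate of $f$ and repeats the dichotomy. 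The central technical difficulty is to show this descent terminates while producing only $\ell$-split vertices; termination will rely on the discrete valuation and residue characteristic zero hypotheses (where at each level one expects either a preperiodic point or a Benedetto-style wandering disk), and $\ell$-split preservation uses that forward orbits, boundaries produced by Proposition~\ref{Prop: Benedetto Wandering}, and periodic points cut out by $\ell$-rational equations all inherit $\ell$-rationality. Once every vertex has been treated, $\Gamma'$ satisfies the required trichotomy at each vertex, so $(f, \Gamma')$ is analytically stable.
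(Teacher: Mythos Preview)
Your overall strategy---Julia vertices via Proposition~\ref{Prop: Preperiodic Julia points}, wandering components via Proposition~\ref{Prop: Benedetto Wandering}, attracting basins via a small forward-invariant disk, and Rivera domains via Lemma~\ref{Lem: Rivera periodic locus}---matches the paper's proof. But the ``central technical difficulty'' you flag in the Rivera case dissolves on closer inspection, and since you do not actually resolve the termination of your descent, this is currently a genuine gap.

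The point is that a connected component $D$ of $W \smallsetminus P$ (where $P$ is the periodic locus in the Rivera domain $W$) can \emph{never} be periodic under $f$. Its boundary point lies in $P$, hence is periodic; but if $D$ itself were periodic, then some iterate of $f$ would restrict to an automorphism of the disk $D$, and the argument inside the proof of Lemma~\ref{Lem: Rivera periodic locus} shows that such a disk contains periodic points arbitrarily close to its boundary---contradicting $D \cap P = \varnothing$. So $D$ is automatically wandering, your descent never occurs, and the Rivera case reduces immediately to the wandering-disk argument you already gave. This is precisely how the paper handles it: once $\xi = f^m(\zeta)$ lands in some component $D$ of $W \smallsetminus P$, one adjoins the (periodic, $\ell$-split) orbit of $\partial D$ and the finitely many iterates of $\zeta$ needed to reach $D$, and the resulting $\Gamma'$-disk containing $\xi$ is a wandering $F$-disk.

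One organizational point also needs tightening. Treating each vertex of $\Gamma$ in isolation and taking the union of the augmentations is not quite sufficient: the new vertices you adjoin (boundary orbits, initial iterates, the point $\zeta_D$ near an attracting cycle) must themselves satisfy the trichotomy, and augmentations made for one vertex could in principle destroy $F$-disks created for another. The paper handles this by processing in four passes---Julia, wandering, attracting basins, Rivera---and noting that each pass adds vertices only within the grand orbits of Fatou components of the corresponding type, so it cannot interfere with $F$-disks produced in earlier passes. Your write-up should make this non-interference explicit.
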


	Setting $\Gamma_0 = \Gamma$, we will successively construct vertex sets $\Gamma_n \supset \Gamma_{n-1}$ for which fewer of the vertices in $\Gamma_n$ fail to meet one of the properties in the statement of the theorem. Or rather, for ease of notation, we will speak of ``enlarging the vertex set $\Gamma$'' at every step in this inductive procedure and dispense with the subscripts entirely. 

	Before beginning the proof in earnest, we note that a vertex $\zeta \in \Gamma$ is either in the Julia set $\julia(f)$, or else it lies in a Fatou component $U$. Then either $U$ is a wandering component, or else there exists $m \geq 0$ such that $f^m(U)$ is a Rivera domain or the basin of attraction of a periodic point (Proposition~\ref{Prop: Classification}). We will treat each of these cases separately, and then conclude by proving that what we have accomplished is sufficient for the theorem. 

\medskip

\noindent \textbf{Step 1:} Julia vertices.  Each element of $\Gamma \cap \julia(f)$ is preperiodic (Proposition~\ref{Prop: Preperiodic Julia points}). Enlarge $\Gamma$ by adjoining the forward orbits of all such points; now $\zeta \in \Gamma \cap \julia(f)$ implies $f(\zeta) \in \Gamma$. 

\medskip

\noindent \textbf{Step 2:} Wandering Fatou components.	Suppose that $\Gamma$ has nonempty intersection with a wandering Fatou component, say $U$. Let $\zeta_1, \ldots, \zeta_s$ be the vertices in the grand orbit of $U$. Then there exist integers $n_1, \ldots, n_s \geq 0$ such that $f^{n_1}(\zeta_1), \ldots, f^{n_s}(\zeta_s)$ lie in the same Fatou component, say $V$, and $f^m(V) \cap \Gamma = \varnothing$ for all $m \geq 1$. We may further assume that $V$ is a disk whose boundary point is periodic and $\ell$-split (Proposition~\ref{Prop: Benedetto Wandering}); let $\OO_V$ be the orbit of the boundary of $V$. Adjoin
	\[
		\OO_V \quad \text{and} \quad \bigcup_{j = 1}^s \Big\{ f(\zeta_j), f^2(\zeta_j), \ldots, f^{n_j}(\zeta_j) \Big\}
	\]
to $\Gamma$. Then $\Gamma$ is still $\ell$-split and finite. By construction, $f^m(V)$ is an $F$-disk for all $m \geq 1$. If $\zeta$ is a vertex in the grand orbit of $U$ such that $f(\zeta) \not\in \Gamma$, then our construction shows $f(\zeta) \in f(V)$, a wandering $F$-disk. 
	
\medskip

\noindent \textbf{Step 3:} Attracting basins. Suppose that $\Gamma$ has nonempty intersection with a preperiodic component $U$ such that $f^m(U)$ is the immediate basin of attraction of a (type~I) periodic point~$x$. For ease of exposition, we will explain the case where $x$ is a fixed point; the more general setting requires added notational effort only. 

We claim that $x \in \PP^1(\hat \ell)$, where $\hat \ell$ is the closure of $\ell$ in $k$. Indeed, since $\Gamma$ has nontrivial intersection with the basin of attraction of $x$, any small disk $D$ about $x$ will contain $f^n(\zeta)$ for some vertex $\zeta$ and all $n \gg 0$. In particular, there exists a sequence of elements $a_n \in \ell$ and radii $r_n \in |\ell^\times|$ such that $f^n(\zeta) = \zeta_{a_n, r_n}$, and such that the associated disk $\disk(a_n, r_n)$ is contained in $D$ for all $n \gg 0$. It follows that $a_n \to x$. 

Write $V = f^m(U)$ for the immediate basin of attraction of $x$, and let $D$ be a small closed disk about $x$ in $V$. We assume that $D$ is chosen small enough that $D \cap \Gamma$ is empty and that $f(D) \subsetneq D$. By the previous paragraph, we may further shrink $D$ if necessary in order to assume that its boundary point $\zeta_D$ is $\ell$-split. Let us adjoin $\zeta_D$ to $\Gamma$, so that $D \cap \Gamma = \{\zeta_D\}$. 

	By enlarging $\Gamma$ with sufficiently many iterates of its elements lying in the grand orbit of $V$, we may assume that (1) $D \cap \Gamma = \{\zeta_D\}$, and (2) if $\zeta \in \Gamma$ lies in the grand orbit of $V$, but $f(\zeta) \not\in \Gamma$, then $f(\zeta) \in D$. By (1) and the fact that $f(D) \subsetneq D$, each connected component of $D \smallsetminus \{\zeta_D\}$ is an $F$-disk.  

\medskip

\noindent \textbf{Step 4:} Rivera domains. Now suppose that $\Gamma$ has nonempty intersection with the grand orbit of a Rivera domain $U$. For ease of exposition, we will explain the case where $U$ is a fixed Rivera domain. By enlarging $\Gamma$ with sufficiently many iterates of its elements, we may assume that if $\zeta \in \Gamma$ lies in the grand orbit of $U$, but $f(\zeta) \not\in \Gamma$, then $\zeta$ lies in $U \smallsetminus P$, where $P$ is the periodic locus of $f$ in $U$. By Lemma~\ref{Lem: Rivera periodic locus}, $P$ is closed and connected in $U$. Thus $U \smallsetminus P$ is a disjoint union of open disks. 

	Let $D$ be a connected component of $U \smallsetminus P$. Observe that $f^m(D) \cap f^n(D) = \varnothing$ for all $m > n \geq 0$. Indeed, the boundary point of $D$ is periodic, but $D$ cannot itself be periodic. For otherwise it would contain periodic points close to its boundary, in contradiction to the fact that it is disjoint from the periodic locus. Moreover, if $D \cap \Gamma$ is nonempty, then $D$ contains an element $a \in \PP^1(\ell)$, and hence so do each of its iterates $f^n(D)$. Without loss, we may change coordinates in order to assume that the Rivera domain $U$ is contained in the unit disk $\disk(0,1)^-$. Now the diameter of $D$ is the difference $|a - f^n(a)| \in |\ell^\times|$, where $n$ is the period of the boundary of $D$. That is, the boundary point of $D$ is periodic and $\ell$-split. The argument given in Step 2 may now be applied to $D$ (in place of $V$) in order to show that if $\zeta \in \Gamma$ is a vertex in the grand orbit of $D$ such that $f(\zeta) \not\in \Gamma$, then $f(\zeta)$ lies in a wandering $F$-disk with periodic boundary point. 

\medskip

\noindent \textbf{Step 5:} Conclusion. After applying Step~1, we may assume that if $\zeta \in \Gamma$ and $f(\zeta)\not\in\Gamma$, then $\zeta$ lies in the Fatou set of $f$. There are finitely many grand orbits of Fatou components that meet $\Gamma$. After applying Step~2, we see that if $\zeta \in \Gamma$ and $f(\zeta)\not\in \Gamma$, then either $f(\zeta)$ lies in a wandering $F$-disk, or else $\zeta$ lies in the grand orbit of a periodic Fatou component. After applying Step~3, we conclude that if $\zeta \in \Gamma$ and $f(\zeta)\not\in \Gamma$, then $f(\zeta)$ lies in an $F$-disk containing an attracting periodic point, or it lies in a wandering $F$-disk with periodic boundary point, or else $\zeta$ lies in the grand orbit of a Rivera domain. Note that in Step~3, we have only added vertices in grand orbits of attracting basins, so this has no impact on the $F$-domains we created in Step~2. Finally, after applying Step~4 we have the conclusion of the theorem. Again, note that we have only added vertices in grand orbits of Rivera domains, which does not impact the work from Steps~2 or~3. This concludes the proof of Theorem~\ref{refined Theorem D} (and therefore also of Theorem~D).

\bigskip
\section{Algebraically stable resolutions of complex surfaces}
\label{Sec: Resolutions of surfaces}

	In this section, we prove Theorem~E. Throughout, we will write $\hat \CC$ for the Riemann sphere (in order to distinguish it from the complex scheme $\PP^1_\CC$). Recall the statement:

 \begin{thmE}
 	Let $f_t: \hat \CC \to \hat \CC$ be a meromorphic family of rational functions for $t \in \DD$ such that $\deg(f_t) = d \geq 2$ when $t \neq 0$. Let $\pi: X \to \DD$ be a normal connected proper fibered surface such that $\pi^{-1}(\DD^*) \cong \DD^* \times \hat \CC$ and each irreducible component of $X_0 = \pi^{-1}\{0\}$ has multiplicity~1. Consider the rational map $F: X \dashrightarrow X$ defined by $(t,z) \mapsto (t,f_t(z))$. There exists a proper modification $Y \to X$ that restricts to an isomorphism over $\DD^*$ such that the induced rational map $F_Y : Y \dashrightarrow Y$ is algebraically stable. 
 \end{thmE}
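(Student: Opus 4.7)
The plan is to translate the problem into the Berkovich setting, invoke Theorem~\ref{refined Theorem D}, and translate back. Let $\ell$ denote the field of germs of meromorphic functions at $0\in\CC$, so that $\hat\ell = \CCt$ and $k = \LL$ as in the conventions of Section~\ref{Sec: Stable augmentations}. The family $f_t$ defines a rational function $f\in\ell(z)$ of degree $d$. The central fiber $X_0$ determines a canonical $\ell$-split vertex set $\Gamma_0\subset\Berk_k$: each irreducible component of $X_0$ (of multiplicity~$1$) induces a discrete $\ell$-rational valuation on $\ell(z)$, hence an $\ell$-split type~II point in $\Berk_k$.

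I would first make precise the standard dictionary between $\ell$-split vertex sets $\Gamma\subset\Berk_k$ and proper normal models of $\PP^1_{\hat\ell}$ whose central fiber is a reduced normal-crossings configuration: vertices of $\Gamma$ correspond to components of the central fiber, two-pointed $\Gamma$-domains correspond to nodes, and tangent directions at a vertex $\zeta$ correspond to closed points of the associated component. Under this dictionary, adjoining a new $\ell$-split vertex of the form $\zeta_{a,r}$ with $a\in\ell$ and $r\in|\ell^{\times}|$ is implemented by a finite sequence of blow-ups at $\ell$-rational smooth points of the central fiber. Applying the refined form of Theorem~D (Theorem~\ref{refined Theorem D}) to $(f,\Gamma_0)$ produces an $\ell$-split stable augmentation $\Gamma'\supset\Gamma_0$, and I would take $Y\to X$ to be the proper modification whose central fiber corresponds to $\Gamma'$. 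Since every added blow-up is centered over the origin, this modification restricts to an isomorphism over $\DD^{*}$, as required.

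The heart of the argument is then to show that analytic stability of $(f,\Gamma')$ implies algebraic stability of $F_Y$. In charts adapted to $\Gamma'$, a component $E_\zeta\subset Y_0$ is contracted by $F_Y$ precisely when $f(\zeta)\notin\Gamma'$, in which case its image is the point of some adjacent component $E_{\zeta'}$ corresponding to the tangent direction at $\zeta'$ that contains $f(\zeta)$, where $\zeta'$ is the boundary point of the $\Gamma'$-domain $U$ containing $f(\zeta)$. By the conclusion of Theorem~\ref{refined Theorem D}, this $U$ is an $F$-disk (either wandering with periodic boundary, or containing an attracting type~I periodic point). By Lemma~\ref{Lem: F-domains}, the entire forward $f$-orbit of $U$ lies in $F$-domains, and so never again meets $\Gamma'$. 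Dually, indeterminacy points of $F_Y$ on a component $E_{\zeta'}$ are located at tangent directions corresponding to $\Gamma'$-domains whose forward $f$-iterates meet $\Gamma'$, that is, at $J$-configurations. Therefore the image of any contracted curve, together with all of its $F_Y$-iterates, lies at a tangent direction of an $F$-domain and can never be an indeterminacy point; this yields algebraic stability.

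The main obstacle is the local dictionary used in the preceding paragraph: one must rigorously identify (i) contracted components of $F_Y$ with vertices $\zeta\in\Gamma'$ having $f(\zeta)\notin\Gamma'$, (ii) their image points with the tangent direction at the relevant boundary vertex of the containing $\Gamma'$-domain, and (iii) indeterminacy points of $F_Y$ with tangent directions belonging to $J$-configurations. This requires an explicit coordinate computation of $F$ on the iterated blow-up charts producing $Y$, together with a careful matching of the algebro-geometric notions of contraction and indeterminacy on a (possibly singular) surface with the combinatorics of $\Gamma'$-domains in $\Berk_k$. Once this dictionary is in place, the algebraic stability of $F_Y$ is a direct consequence of the structural conclusion of Theorem~\ref{refined Theorem D} together with Lemma~\ref{Lem: F-domains}.
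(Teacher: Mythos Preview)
Your proposal is correct and follows essentially the same route as the paper: translate $X$ to an $\ell$-split vertex set, apply Theorem~D to obtain an analytically stable augmentation $\Gamma'$, pass to the corresponding model $Y$, and then use the dictionary between contraction/indeterminacy on $Y_0$ and the $F$-/$J$-domain dichotomy in $\Berk$ to conclude algebraic stability. The paper formalizes the dictionary you flag as the ``main obstacle'' via a bijection between isomorphism classes of models and $\ell$-split vertex sets (Proposition~\ref{Prop: Vertex sets and models}) together with a compatibility lemma (Lemma~\ref{Lem: Dynamic compatibility}) showing $F(x)=x'$ iff $f(U_x)\subset U_{x'}$ and that $x$ is an indeterminacy point iff $f(U_x)\cap\Gamma_Y\neq\varnothing$; it then invokes only the basic Theorem~D (not the refined Theorem~\ref{refined Theorem D}), since plain analytic stability already forces the orbit of each vertex to remain in $\Gamma'\cup\{\text{$F$-domains}\}$, yielding the contradiction directly.
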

 
 \begin{remark}
 	The hypotheses of the theorem are trivially satisfied if one takes $X = \DD \times \hat \CC$.
 \end{remark}
	
	To apply our result on analytically stable augmentations of vertex sets in the Berkovich projective line, we must connect the dynamics of rational functions on $\Berk_{\LL}$ with the dynamics of rational maps on fibered surfaces. The connection between vertex sets and formal semistable fibrations of curves over complete valuation rings of height~1 is well understood \cite{BPR_section5,Bosch-Lutkebohmert_Stable_1}, and we will describe a modification of that theory that applies in our setting.  An alternate approach involving moving frames, as in Kiwi's article  \cite{Kiwi_Rescaling_Limits_2015}, may also be feasible.
	
	
\subsection{Models and vertex sets}

	We now recall and extend the discussion from \cite[\S5.1]{DeMarco_Faber_Degenerations_2014} on the relationship between (degenerating) families of rational curves over a complex disk and vertex sets in $\Berk_\LL$. As we will need to identify families over disks of varying sizes, it will be most convenient to work with schemes over the ring of holomorphic germs at the origin of a disk. 
	
	Write $\OO$ for the ring of germs of holomorphic functions at the origin in $\DD$; $\OO$ is a local PID with uniformizer $t$, say. Write $\ell$ for its fraction field. It is a discretely valued non-Archimedean field when endowed with the natural absolute value $\exp\left( - \ord_{t=0}(\cdot)\right)$ corresponding to vanishing at the origin. The completion of $\ell$ is the field of formal Laurent series $\CCt$. Write $\LL$ for the completion of an algebraic closure of $\CCt$, and write $\LL^\circ$ for its valuation ring. 

\begin{define}
	A \textbf{model of $\PP^1_\ell$} is a pair $(X, \eta)$ consisting of a normal connected projective $\OO$-scheme $X$ whose closed fiber is reduced at all codimension~1 points, and an isomorphism $\eta: X \times_{\Spec \OO} \Spec \ell \simarrow \PP^1_{\ell}$. We say that a model $(X, \eta)$ \textbf{dominates} a model $(X', \eta')$ if there is an $\OO$-morphism $g: X \to X'$ such that the map $ \eta' \circ g_{\ell} \circ \eta^{-1}$ is the identity on $\PP^1_{\ell}$. Two models will be called \textbf{isomorphic} if there is a dominating isomorphism between them. 
\end{define}


\begin{remark}
When discussing models, we will typically suppress mention of the map $\eta$ and simply write $X$. 
\end{remark}

\begin{remark}
\label{Rem: Scheme-to-complex}
In the language of complex geometry, a model $(X, \eta)$ may be interpreted as a normal connected projectively fibered surface $\pi: X \to \DD_\varepsilon$ for some small $\varepsilon > 0$, where $\DD_\varepsilon$ is the complex disk of radius $\varepsilon$. The isomorphism $\eta$ corresponds to a trivialization $\pi^{-1}(\DD_{\varepsilon}^*) \cong \DD_{\varepsilon}^* \times \hat \CC$. Note that shrinking $\varepsilon$ does not change the (isomorphism class of the) model, as we are only concerned with its germ structure. 
\end{remark}

	Let $X$ be a model of $\PP^1_\ell$. We claim that $X$ gives rise, canonically, to a vertex set $\Gamma_X \subset \Berk = \Berk_{\LL}$. The local ring of $\DD$ at the origin is contained inside $\LL^\circ$, and hence so is its completion. By completing along the central fiber $X_0$ and base extending to $\LL^\circ$, we obtain an admissible formal scheme $\XX$ over $\LL^\circ$ with generic fiber $\Berk$. Note that since $X_0$ is reduced in codimension~1, it may be identified with the special fiber $\XX_s$ as $\CC$-schemes.  Let 
		$$\red_X: \Berk \to X_0$$ 
be the canonical surjective reduction map \cite[2.4.4]{Berkovich_Spectral_Theory_1990}. Let $\eta_1, \ldots, \eta_r$ be the generic points of the irreducible components of the special fiber $X_0$. There exist unique type~II points $\zeta_1, \ldots, \zeta_r \in \Berk$ such that $\red_X(\zeta_i) = \eta_i$ for $i = 1, \ldots, r$. The desired vertex set is $\Gamma_X = \{\zeta_1, \ldots, \zeta_r\}$. 

\begin{lem}
For any model $X$ of $\PP^1_\ell$, the vertex set $\Gamma_X$ is $\ell$-split. 
\end{lem} 

\begin{proof}
Let $\eta$ be the generic point of an irreducible component $Z$ of the special fiber $X_0$. The corresponding point $\zeta \in \Gamma_X$ is constructed as follows. For $f \in \ell(z)$, we set $\|f\|_\eta := \exp(-\ord_Z(f))$. This is a norm on $\ell(z)$, and $\|c\|_\eta = 1$ for any $c \in \CC$. Since $X_0$ is reduced, we see that 
\[
\|t\|_\eta = \exp(-\ord_Z(t)) = e^{-1} = |t|,
\] 
so that this norm agrees with the given absolute value on $\ell$. By standard results in ultrametric analysis, $\|\cdot\|_\eta$  extends uniquely to a norm on $\LL(z)$ which extends the given absolute value on $\LL$. Now $\zeta$ is the point of $\Berk$ corresponding to the norm $\|\cdot \|_\eta$.

By construction, for $f \in \ell(z)$ we find that $|f(\zeta)| = \exp(-\ord_Z(f)) \in |\ell|$. An application of Proposition~\ref{Prop: ell-split criterion} completes the proof. 
\end{proof}

Recall from \S\ref{Sec: vertex sets} that we write $\SS(\Gamma_X)$ for the partition of $\Berk$ consiting of the elements of $\Gamma_X$ along with the connected components of $\Berk \smallsetminus \Gamma_X$ (i.e., the $\Gamma_X$-domains).

\begin{prop}
\label{Prop: Vertex sets and models}
	The association $X \mapsto \Gamma_X$ induces a bijection between the collection of isomorphism classes of models of $\PP^1_{\ell}$ and the collection of $\ell$-split vertex sets in $\Berk$. Moreover, the following are true:
	\begin{enumerate}
		\item Fix a model $X$. For each closed point $x \in X_0$, the formal fiber $\red_X^{-1}(x)$ is a $\Gamma_X$-domain. 
			The association $x \mapsto  \red_X^{-1}(x)$ induces a bijection between points of the $\CC$-scheme $X_0$ 
			and elements of $\SS(\Gamma_X)$.
			
		\item	If $X$ and $X'$ are models of $\PP^1$, then  $X$ dominates $X'$ if and only if $\Gamma_X \supset \Gamma_{X'}$. 
	\end{enumerate}
\end{prop}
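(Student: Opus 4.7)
The plan is to prove all three claims simultaneously by analyzing the reduction map in local coordinates, then proving the bijection in both directions, and finally deducing~(2) from functoriality. Throughout, $\XX$ denotes the formal completion of $X$ along $X_0$, base-changed to $\Spf \LL^\circ$; it has generic fiber $\Berk$ and reduced special fiber $\XX_s$ that coincides with $X_0$ as a $\CC$-scheme (this uses reducedness in codimension~1). Berkovich's reduction map $\red_X: \Berk \to \XX_s$ is surjective and anticontinuous, and preimages of generic points are the type~II points $\zeta_i$ comprising $\Gamma_X$; the $\ell$-split property of each $\zeta_i$ follows since the local ring of $X$ at the corresponding generic point is defined over $\OO$.

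For part~(1), I would work locally in the formal completion $\widehat{\OO}_{X,x}$ for a closed point $x \in X_0$. Because $X$ is normal and $X_0$ is reduced in codimension~1, the point $x$ lies on finitely many components $C_{i_1}, \ldots, C_{i_s}$ whose generic points correspond to vertices $\zeta_{i_1}, \ldots, \zeta_{i_s} \in \Gamma_X$. Working with an open affine neighborhood of $x$, the formal fiber $\red_X^{-1}(x)$ is a connected open subset of $\Berk$ whose boundary is precisely $\{\zeta_{i_1}, \ldots, \zeta_{i_s}\}$ (a consequence of the local geometry: each branch through $x$ contributes one boundary point). Thus $\red_X^{-1}(x)$ is a $\Gamma_X$-domain, and anticontinuity of $\red_X$ combined with surjectivity gives the claimed bijection between closed points of $X_0$ and $\Gamma_X$-domains (with generic points of $X_0$ corresponding to the vertices themselves).

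For the bijectivity of $X \mapsto \Gamma_X$, injectivity follows from Zariski's main theorem: two normal, projective models of $\PP^1_\ell$ with the same collection of divisorial valuations on $\ell(T)$ are identified by the birational map induced from the generic fibers. For surjectivity, given an $\ell$-split vertex set $\Gamma = \{\zeta_1, \ldots, \zeta_r\}$, I would inductively construct a model by iterated blowups of $\PP^1_\OO$. Writing each $\zeta_j = \zeta_{a_j, |t|^{n_j}}$ with $a_j \in \ell$ and $n_j \in \ZZ_{\geq 0}$ (after changes of coordinate), each vertex can be realized as the generic point of an exceptional component in a sequence of blowups at closures of $\ell$-rational points in special fibers. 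The resulting projective $\OO$-scheme is regular, hence normal. If the Gauss point (or any other auxiliary vertex introduced by the blowup) is not in $\Gamma$, one contracts its component using Artin's contractibility criterion (the strict transform of the generic fiber is a line, and the chain of components has intersection data that permits contraction to a normal --- though possibly singular --- model). This construction step is where the main technical obstacle lies, as one must verify that contractions can always be performed while preserving the properties required of a model.

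For part~(2), the forward direction is direct: a dominating morphism $g: X \to X'$ sends generic points of $X_0$ to points of $X'_0$; since $g$ is surjective in the central fiber and the generic points of $X'_0$ must be in the image, each vertex of $\Gamma_{X'}$ is hit by some vertex of $\Gamma_X$ (and indeed equals its image under the induced map on valuations), so $\Gamma_{X'} \subset \Gamma_X$. Conversely, if $\Gamma_X \supset \Gamma_{X'}$, the identity on the generic fibers gives a birational map $X \dashrightarrow X'$; since $X$ is normal, $X'$ is proper over $\OO$, and every divisorial valuation of $X'$ extends (via containment of vertex sets) to a divisorial valuation of $X$, this birational map is defined at every codimension~1 point of $X$, hence (by normality and properness, applying the valuative criterion) extends to a morphism $X \to X'$ of $\OO$-schemes that dominates $X'$.
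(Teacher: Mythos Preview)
Your approach diverges from the paper's in the surjectivity step. The paper constructs the model for a given $\ell$-split vertex set $\Gamma$ by gluing explicit affine pieces: it invokes the formal construction of \cite[Thm.~4.11]{BPR_section5}, observing that the relevant affinoid algebras
\[
T\langle Y_1,\ldots,Y_m\rangle / \bigl((z-a_i)Y_i - c_i\bigr)
\]
are formal completions of honest $\OO$-algebras (because $a_i,c_i\in\OO$ by $\ell$-splitness), and then glues those algebraic models. Your route via iterated blowups of $\PP^1_\OO$ followed by contractions is a reasonable alternative, and has the advantage of being more geometric, but you correctly flag that the contraction step is where the real work lies: you must check that Artin's criterion (or a more direct argument for trees of rational curves over a DVR) yields a projective \emph{scheme} rather than merely an algebraic space, and that the resulting central fiber stays reduced in codimension~1. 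The paper's gluing approach sidesteps this entirely.

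There is a genuine gap in your argument for the converse of~(2). You write that the birational map $X\dashrightarrow X'$ extends to a morphism ``by normality and properness, applying the valuative criterion.'' But the valuative criterion only extends the map across codimension-$1$ points of $X$; it leaves open the possibility of indeterminacy at finitely many closed points of $X_0$. Normality alone does not remove these (indeed, if $\Gamma_X \not\supset \Gamma_{X'}$ such indeterminacy \emph{does} occur). What you need is an additional argument: if $x\in X_0$ were an indeterminacy point, resolving it would produce an exceptional component mapping onto a component of $X'_0$, hence a vertex of $\Gamma_{X'}$ lying in the formal fiber over $x$ and therefore not in $\Gamma_X$---contradicting $\Gamma_X\supset\Gamma_{X'}$. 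Your sentence about ``every divisorial valuation of $X'$ extends to a divisorial valuation of $X$'' gestures at this, but the logical link to extension of the rational map is not made. The paper, for its part, simply defers to the formal case in \cite[Thm.~4.11]{BPR_section5}.
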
 

\begin{proof}[Sketch of proof]
	We have already shown that $X \mapsto \Gamma_X$ gives a well-defined $\ell$-split vertex set in $\Berk$. Functoriality of formal completion, the generic fiber construction, and the reduction map construction shows that if $X'$ and $X$ are isomorphic as models, then $\Gamma_{X'} = \Gamma_X$. Thus we have a well defined map between the desired collections of objects. The map is injective because models are determined by their formal fibers \cite[Lem.~3.10]{Bosch-Lutkebohmert_Stable_1}.
	
	We now sketch the proof that $X \mapsto \Gamma_X$ is surjective. Fix a vertex set $\Gamma$. The argument in \cite[Thm.~4.11]{BPR_section5} carries over to our setting mutatis mutandis and produces a formal model $\mathfrak X$ over $\hat \ell = \CCt$ with associated vertex set $\Gamma$. The vertex set $\Gamma$ allows us to define gluing data for $\Berk$ consisting of a finite union of closed affinoids with Shilov boundary in $\Gamma$. In order to pass to algebraic models, we need only observe that the canonical models of the closed affinoids in the proof of  \cite[Thm.~4.11]{BPR_section5} are formal completions of algebraic models over $\ell$. 
More precisely, if $T = \LL\langle z\rangle$ is the standard single variable Tate algebra, then the affinoid algebras in question are of the form 
	\[
		T \langle Y_1, \ldots, Y_m \rangle /  \left( (z-a_i)Y_i - c_i \; : \; i = 1, \ldots, m \right),
	\]
 for some $a_i, c_i \in \OO$ (by the $\ell$-split hypothesis). The associated $\OO$-algebra is given by 
		\[
			\OO[z, Y_1, \ldots, Y_m] / \left((z-a_i)Y_i - c_i \; : \; i = 1, \ldots, m \right).
		\]
As in the formal case, the associated local models over $\OO$ glue to give a global model $X$. A direct computation shows that $X$ is normal with reduced central fiber, and by construction we have $\Gamma_X = \Gamma$. 

	Now fix a model $X$. The formal fiber $\red_X^{-1}(x)$ is open for each closed point $x \in X_0$ by  anticontinuity of the reduction map. Since each one is disjoint from the vertex set $\Gamma_X$, it follows that $\red_X^{-1}(x)$ is a $\Gamma_X$-domain, and that $x \mapsto \red_X^{-1}(x)$ defines a bijection between the points of the $\CC$-scheme $X_0$ and the collection of $\Gamma_X$-domains and vertices $\SS(\Gamma_X)$. 
	
	The proof of the final claim about domination and vertex set containment follows exactly as in the formal case; see \cite[Thm.~4.11]{BPR_section5}. 
\end{proof}

\subsection{Proof of Theorem~E}

	Suppose that $f_t$ is a meromorphic family of rational functions with $\deg(f_t) = d \geq 2$ for $t \neq 0$, and that $X$ is a model of $\PP^1_\ell$. We identify $X$ with a normal fibered complex surface over a small disk as in Remark~\ref{Rem: Scheme-to-complex}. Define a rational map $F: X \dashrightarrow X$ by $F(t,z) = (t,f_t(z))$ for all $t \neq 0$. Let $f: \Berk \to \Berk$ be the rational function determined by viewing the parameter $t$ as an element of the field $\LL$. 

\begin{lem}
\label{Lem: Dynamic compatibility}
The action of $F$ on $X_0$ and the action of $f$ on $\SS(\Gamma_X)$ are compatible in the following sense: If $x,x' \in X_0$ are points (closed or generic) and $U_x = \red_X^{-1}(x)$ and $U_{x'} = \red_X^{-1}(x')$ are the 
			corresponding $\Gamma_X$-domains or vertices, then 
		\[
			F(x) = x' \quad \text{if and only if} \quad f(U_x) \subset U_{x'}. 
		\]
In particular, $x \in X_0$ is an indeterminacy point for $F$ if and only if $f(U_x)$ contains an element of the vertex set $\Gamma_X$.
\end{lem}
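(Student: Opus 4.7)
The plan is to derive everything from functoriality of the specialization map. Completing $X \times_{\OO} \LL^{\circ}$ along its central fiber yields an admissible formal $\LL^{\circ}$-scheme $\mathfrak{X}$ whose Berkovich generic fiber is $\Berk$ and whose special fiber is canonically identified with $X_0$ as a $\CC$-scheme. The rational map $F$ gives rise to a formal rational map $\mathfrak{F}: \mathfrak{X} \dashrightarrow \mathfrak{X}$ whose generic fiber is precisely $f$; this is essentially the definition of $f$, obtained by viewing the parameter $t$ as an element of $\LL$. On any Zariski open $V \subset X$ where $F$ is a morphism, the induced morphism of formal schemes intertwines $f$ and $F$ via specialization, so $\red_X(f(z)) = F(\red_X(z))$ for all $z \in \red_X^{-1}(V \cap X_0)$. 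The forward direction of the equivalence is then immediate: if $F$ is defined at $x$ with $F(x) = x'$, then for every $z \in U_x$ we have $\red_X(f(z)) = F(x) = x'$, whence $f(z) \in U_{x'}$.

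For the converse and the indeterminacy statement, the first observation is that $f(U_x)$ is a connected subset of $\Berk$, since $f$ is continuous and $U_x$ is connected. In particular, if $f(U_x) \cap \Gamma_X = \varnothing$, then $f(U_x)$ lies in a single $\Gamma_X$-domain, necessarily of the form $U_{x'}$ for a closed point $x' \in X_0$. Now suppose $x$ is a closed indeterminacy point of $F$. I resolve $F$ at $x$ by a sequence of blowups $\pi: \tilde X \to X$, producing an extension $\tilde F = F \circ \pi$ that is a morphism on a neighborhood of the exceptional divisor $E = \pi^{-1}(x)$. Because $F$ is genuinely undefined at $x$, the image $\tilde F(E)$ is not a single point; it is a closed subset of $X_0$ of dimension at least~$1$, and since $X_0$ is itself one-dimensional, $\tilde F(E)$ must be a union of irreducible components of $X_0$. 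Hence $\tilde F(E)$ contains some generic point $\eta_i$, so there is a generic point $\tilde \eta$ of a component of $E$ with $\tilde F(\tilde \eta) = \eta_i$. By Proposition~\ref{Prop: Vertex sets and models} applied to the domination $\tilde X \to X$, we have $\Gamma_{\tilde X} \supset \Gamma_X$ and a unique vertex $\tilde \zeta \in \Gamma_{\tilde X}$ with $\red_{\tilde X}(\tilde \zeta) = \tilde \eta$; functoriality of specialization gives $\red_X(\tilde \zeta) = \pi(\tilde \eta) = x$, so $\tilde \zeta \in U_x$. Applying the forward direction to $\tilde F$ at $\tilde \eta$ yields $f(\tilde \zeta) = \zeta_i \in \Gamma_X$, so $f(U_x)$ contains the vertex $\zeta_i$. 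Conversely, if $F$ is a morphism at the closed point $x$, then $F(x)$ is a closed point of $X_0$, $U_{F(x)}$ is disjoint from $\Gamma_X$, and the forward direction gives $f(U_x) \cap \Gamma_X = \varnothing$. The indeterminacy characterization then yields the reverse direction of the first equivalence: if $f(U_x) \subset U_{x'}$ with $x'$ closed, then $F$ must be defined at $x$, and by disjointness of distinct $\Gamma_X$-domains $F(x) = x'$.

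The main obstacle will be making rigorous the formal construction of $\mathfrak{X}$ and $\mathfrak{F}$ so that their generic fibers really do produce $f$, and verifying the compatibility of specialization under the dominating map $\pi: \tilde X \to X$ at each step of the resolution. Once this technical infrastructure is in place, the remainder of the argument reduces to topology (connectedness of $f(U_x)$) and a simple dimension count on $X_0$, which forces any positive-dimensional image of the exceptional divisor to sweep out a full component of $X_0$ and hence a vertex of $\Gamma_X$.
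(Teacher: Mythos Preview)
Your proof is correct and rests on the same core idea as the paper's: resolve the indeterminacy of $F$ by passing to a dominating model and invoke functoriality of the reduction map. The paper packages this more uniformly by choosing a single global resolution $\rho: Y \to X$ with $\bar F = F \circ \rho$ a morphism, establishing the equivalence $\bar F_0(y) = x' \Leftrightarrow f(U_y) \subset U_{x'}$ once, and then reading off both directions simultaneously from the partition $U_x = \bigcup_{y \in \rho^{-1}(x)} U_y$. Your separation into the regular case (handled directly on $X$) and the indeterminacy case (via the dimension count $\dim \tilde F(E) \geq 1$, which forces $\tilde F(E)$ to contain a component of $X_0$) is a pleasant geometric rephrasing of that same partition argument and makes the source of the vertex in $f(U_x)$ more explicit. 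One small point to patch: your converse paragraph only treats $x'$ closed, whereas the lemma allows $x'$ generic; but then $U_{x'}$ is a singleton, openness of $f$ forces $U_x$ to be a singleton (hence $x$ generic, where $F$ is automatically defined by normality), and the forward direction finishes it.
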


\begin{proof}
	If $F$ were a morphism, this would follow immediately from functoriality of reduction. To apply this argument, we begin by resolving the indeterminacy of $F$. Let $\rho: Y \to X$ be a model dominating $X$ such that the rational map $F$ extends to a morphism $\bar F: Y \to X$ satisfying $\bar F = F \circ \rho$ when the right side is defined. Functoriality of the reduction map gives a commutative diagram
	\[
		\xymatrix{
			\Berk \ar[r]^f \ar[d]_{\red_Y}& \Berk \ar[d]^{\red_X} \\ Y_0 \ar[r]^{\bar F_0} & X_0 
		}
	\]
Here we write $\bar F_0$ for the induced morphism on central fibers. For $y \in Y_0$, let us write $U_y = \red_Y^{-1}(y)$. It follows that 
	\begin{equation}
	\label{Eq: Correspondence on Y}
		\bar F_0(y) = x' \quad \text{if and only if} \quad f(U_y) \subset U_{x'}. 
	\end{equation}
Since domination of models corresponds to vertex set containment, we may partition the $\Gamma_X$-domain $U_X$ as
	\[
		U_x = \bigcup_{y \in \rho^{-1}(x)} U_y.
	\]
Applying \eqref{Eq: Correspondence on Y} simultaneously to all $y \in \rho^{-1}(x)$ gives the desired result. 
\end{proof}	
	
	We are now ready to prove Theorem~E. By a gluing construction, it suffices to produce a proper modification $Y \to X$ over a smaller disk; in particular, we may work with models over $\Spec \OO$ throughout. Identify $X$ with a model of $\PP^1_\ell$, and let $\Gamma_X \subset \Berk$ be the associated vertex set. There exists an $\ell$-split vertex set $\Gamma'$ containing $\Gamma_X$ such that the pair $(f, \Gamma')$ is analytically stable (Theorem~D). By Proposition~\ref{Prop: Vertex sets and models}, there is a model $Y$ of $\PP^1_\ell$ that dominates $X$ and has vertex set $\Gamma_Y = \Gamma'$. The rational map $F: X \dashrightarrow X$ extends to a rational map $F_Y: Y \dashrightarrow Y$, which we claim is algebraically stable. This is clear for horizontal curves (i.e., those that project onto $\Spec \OO$). So suppose that $C \subset Y_0$ is an irreducible curve such that $F_Y^n(C)$ is collapsed to an indeterminacy point for $F_Y$, say $y$. Let $\zeta \in \Gamma_Y$ be the vertex such that $\red_Y(\zeta)$ is the generic point of $C$, and let $U = \red_Y^{-1}(y) \in \SS(\Gamma_Y)$ be the $\Gamma_Y$-domain corresponding to $y$. By the preceding lemma, we see $f^n(\zeta) \in U$ and that $f(U)$ meets the vertex set $\Gamma_Y$. This means $U$ is a $J$-domain for $\Gamma_Y$. But the pair $(f, \Gamma_Y)$ is analytically stable, so we have a contradiction.

\bigskip
\section{Examples}
\label{Sec: Examples}
In this final section, we provide a collection of examples to illustrate Theorems C and D.  We also show that the results fail without hypotheses on the Julia set (in Theorem~C) and the field of definition (in Theorem~D).  We conclude with a comparison of analytic stability and the indeterminacy condition in the first author's earlier work \cite{DeMarco_Boundary_Maps_2005}.

To relate our discussion to complex dynamics and the previous article \cite{DeMarco_Faber_Degenerations_2014}, our examples are defined over $\LL$, the completion of the field of Puiseux series in the parameter $t$.   Note that $|t| = \exp(-1) < 1$.  The Gauss point of $\Berk$ (corresponding to the sup-norm on the unit disk) will be denoted by $\zeta_g$.  For $a \in \LL$ and $r \in \RR_{>0}$, we write $\disk(a,r)^-$ and $\disk(a,r)$ for the open and closed Berkovich disks centered at $a$ of radius $r$, respectively. 
	
\subsection{A straightforward computation with Theorem~C}
\label{Ex: Sample computation}
	Consider the pair $(f, \Gamma)$ given by 
		\[
			f(z) = z - 1 + t/z,  \qquad \qquad \Gamma = \{\zeta_g\},
		\]
with degree $d=2$.  Then $f(\zeta_g) = \zeta_g$, and the action on the tangent space $T\Berk_{\zeta_g} = \PP^1(\CC)$ is given by the translation $Tf(z) = z-1$.   The pair $(f, \Gamma)$ is analytically stable, and the Julia set is not contained in $\Gamma$. The second preimage of $\zeta_g$ lies in the disk $D = \disk(0,1)^-$; in fact, the disk $D$ has surplus multiplicity $s_f(D) = 1$.  The $J$-domains are disks of the form $U_a = \disk(a,1)^-$ for $a = 0, 1, 2,  \ldots$ The transition matrix $P$ for the associated Markov chain is given by 
	\[
		\bordermatrix{ 
			& \zeta_g & U_0 & U_1 & U_2 & U_3 & U_4 & \cdots \cr
			\zeta_g & 1/2 & 1/2 & 0 & 0 & 0 & 0 & \cdots \cr
			U_0 & 0 & 1/2 & 1/2 & 0 & 0 & 0 & \cdots \cr
			U_1 & 0 & 1/2 & 0 & 1/2 & 0 & 0 & \cdots \cr
			U_2 & 0 & 1/2 & 0 & 0 & 1/2 & 0 & \cdots \cr
			U_3 & 0 & 1/2 & 0 & 0 & 0 & 1/2 & \cdots \cr
			U_4 & 0 & 1/2 & 0 & 0 & 0 & 0 & \ddots    \cr						
			\vdots   & \vdots & \vdots & \vdots & \vdots & \vdots & \vdots & \ddots \cr
		}
	\]
Upon computing a few powers $P^n$, it is not difficult to see that the unique stationary probability vector in this case is
	\[
		\pi = \begin{pmatrix} 
			0 & 1/2  & 1/4 & 1/8 & 1/16 & \cdots 
			\end{pmatrix}.
	\]

\subsection{The failure of Theorem~C without a hypothesis on $\julia(f)$}
\label{Ex: Simple reduction}
	Theorem~C requires a hypothesis that the Julia set of $f$ is not contained in the vertex set $\Gamma$. To see it is necessary to make some assumption of this kind, consider the pair $(f, \Gamma)$ given by 
	\[
		f(z) = \frac{1}{z^2}, \qquad \qquad \Gamma = \{\zeta_g, \; \zeta_{0, |t|}, \; \zeta_{0, |t|^{-1}}\}.
	\]
Then $f$ has good reduction, so that $\julia(f) = \{\zeta_g\} \subset \Gamma$. Moreover, if $D_+$ (resp. $D_-$) is the open disk with boundary point $\zeta_{0, |t|^{+1}}$ (resp. $\zeta_{0, |t|^{-1}}$) and containing $0$ (resp. $\infty$), then $f(D_+) \subsetneq D_-$ and $f(D_-) \subsetneq D_+$. Hence $(f, \Gamma)$ is analytically stable. 

Let $A_{\pm}$ be the open annuli with boundary points $\zeta_g$ and $\zeta_{0, |t|^{\pm 1}}$, respectively. Then $A_{\pm}$ are the only $J$-domains, so that 
	\[
		\JJ(\Gamma) = \{A_+, A_-, \zeta_g, \zeta_{0, |t|}, \zeta_{0, |t|^{-1}}\}.
	\] 
As $f$ has local degree~2 along the segment $[0, \infty]$, we find that the transition matrix $P$ for our Markov chain is given by the matrix 
	\[
		\bordermatrix{ 
			& A_+ & A_- & \zeta_g & \zeta_{0, |t|} & \zeta_{0, |t|^{-1}} \cr
			A_+ & 0 & 1 & 0 & 0 & 0 \cr
			A_- & 1 & 0 & 0 & 0 & 0 \cr
			\zeta_g & 0 & 0 & 1 & 0 & 0  \cr
			\zeta_{0, |t|} &  0 & 1 & 0 & 0 & 0   \cr
			\zeta_{0, |t|^{-1}} & 1 & 0 & 0 & 0 & 0 \cr
		}		
	\]
Now $P^{2n} = P^2$ and $P^{2n+1} = P$ for $n \geq 1$. As $P^2 \neq P$, the powers of $P$ do not converge. Moreover, there are two independent stationary vectors for $P$, namely 
	\[
		\begin{pmatrix} 1/2 & 1/2 & 0 & 0 & 0\end{pmatrix} \qquad \text{and} \qquad \begin{pmatrix} 0 & 0 & 1 & 0 & 0 \end{pmatrix}.
	\]

\subsection{An analytically unstable example modified by the procedure of Theorem~D} 
\label{Ex: Needs resolution}
	Consider the pair $(f, \Gamma)$ with
		\[
			f(z) = z^2 + \frac{1}{t}, \qquad \qquad \Gamma = \{\zeta_g\}.
		\]
Set $U = \Berk \smallsetminus \disk(0,1)$. One computes that $f(\zeta_g) = \zeta_{1/t, 1} \in U$, and that $f(U) = \Berk \smallsetminus \disk(1/t, 1)$. Hence $\zeta_g \in f(U)$, and we conclude that $(f, \Gamma)$ is not analytically stable. 

	We observe that $\infty$ is an attracting fixed point for $f$, and $\zeta_{0,1/|t|}$ lies in its basin of attraction. Following the proof of Theorem~D, we define 
		\[
			\Gamma' = \Gamma \cup \{\zeta_{0, 1/|t|}\} = \{\zeta_g, \;\zeta_{0, 1/|t|}\}.
		\]
Set $D = \Berk \smallsetminus \disk(0, 1/|t|)^-$. Then $f(D) \subsetneq D$, so that the connected components of $D \smallsetminus \{\zeta_{0, 1/|t|}\}$ are $F$-disks. Since $f(\zeta_{0,1/|t|})$ lies in one of these disks, the pair $(f, \Gamma')$ is analytically stable. 

	Let $A$ be the open annulus with boundary points $\zeta_g$ and $\zeta_{0, 1/|t|}$. It is the unique $J$-domain for $\Gamma'$, so that 
		\[
			\JJ' = \{A, \; \zeta_g, \; \zeta_{0, 1/|t|}\}.
		\]
Note that $f^{-1}(\zeta_g) = \{\zeta_{1/\sqrt{-t}, \sqrt{|t|}}, \zeta_{-1/\sqrt{-t}, \sqrt{|t|}}\} \subset A$ and that $f^{-1}(\zeta_{0,1/|t|}) = \{\zeta_{0,1/\sqrt{|t|}}\} \subset A$. It follows that the transition matrix $P'$ is given by 
	\[
		\bordermatrix{ 
			& A & \zeta_g  & \zeta_{0, 1/|t|} \cr
			A & 1 & 0 & 0 \cr
			\zeta_g & 1 & 0 & 0 \cr
			\zeta_{0, 1/|t|} & 1 & 0 & 0  \cr
		}		
	\]
 Evidently $(P')^n = \mathbf{1}\nu$ for all $n \geq 1$, where $\nu = \begin{pmatrix} 1 & 0 & 0  \end{pmatrix}$, so that all of the mass of $\mu_f$ is contained in $A$. 

\begin{figure}[h]
\includegraphics[width=3.5in]{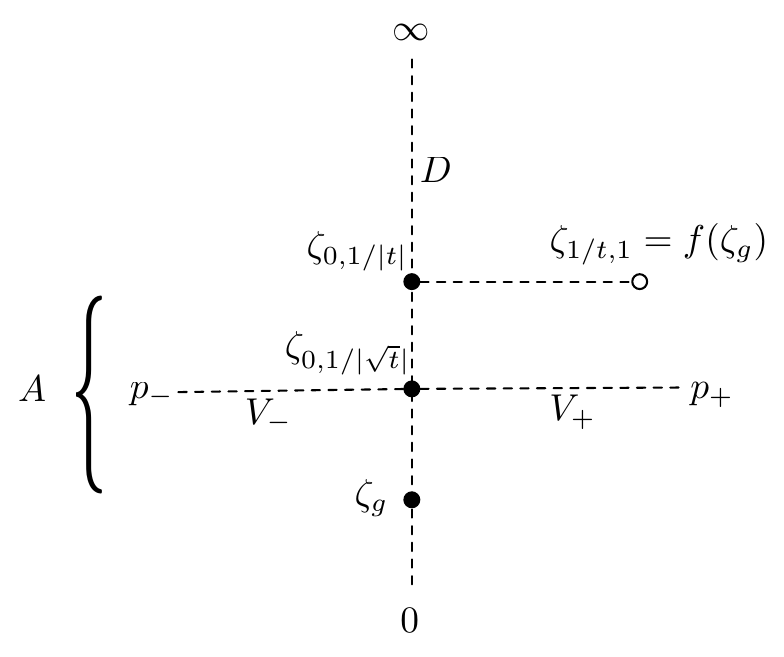}
\caption{A schematic representation of the state spaces $\JJ'$ and $\JJ''$ in Examples~\ref{Ex: Needs resolution} and~\ref{Ex: Different coordinates}. }
\label{quad}
\end{figure}

\subsection{The quadratic polynomial family, again}
\label{Ex: Different coordinates}
	Continuing with the previous example, $f(z) = z^2 + 1/t$, we can enlarge the vertex set $\Gamma'$ in order to obtain further information about the location of the mass of $\mu_f$. Looking at the Newton polygon of $z^2 - z + 1/t$, we see that $f$ has (classical) fixed points $p_{\pm}$ with $|p_{\pm}| = 1 / |\sqrt{t}|$, and they are easily seen to be repelling. Let us define
		\[
			\Gamma'' = \{\zeta_g, \; \zeta_{0, 1/|t|}, \;\zeta_{0, 1/|\sqrt{t}|}\}.
		\]
As $f(\zeta_{0,1/|\sqrt{t}|}) = \zeta_{0, 1/|t|}$, the pair $(f, \Gamma'')$ is also analytically stable. Let $V_{\pm}$ be the open disks with boundary point $\zeta_{0, 1/|\sqrt{t}|}$ and containing $p_{\pm}$, respectively. The set of states in this case is 
		\[
			\JJ'' = \{ V_+, \;, V_-, \; \zeta_g, \; \zeta_{0, 1/|t|},  \; \zeta_{0, 1/|\sqrt{t}|}\},
		\]
and the transition matrix $P''$ is given by 
	\[
		\bordermatrix{ 
			& V_+ & V_- & \zeta_g  & \zeta_{0, 1/|t|} & \zeta_{0, 1/|\sqrt{t}|} \cr
			V_+ &  1/2 & 1/2 & 0 &  0 & 0\cr
			V_- & 1/2 & 1/2 & 0 & 0  & 0\cr
			\zeta_g &  1/2 & 1/2 & 0 & 0  & 0 \cr
			\zeta_{0, 1/|t|} & 0 & 0 & 0 & 0  & 1 \cr			
			\zeta_{0, 1/|\sqrt{t}|} & 1/2 & 1/2 & 0 &  0 & 0\cr
		}		
	\]	
Now we have $(P'')^n = \mathbf{1}\nu$ for $n \geq 2$, where $\nu = \begin{pmatrix} 1/2 & 1/2 & 0  & 0 & 0 \end{pmatrix}$, so that 
	\[
		\mu_f(V_+) = \mu_f(V_-) = 1/2.
	\]

\subsection{The failure of Theorem~D without suitable hypotheses}
\label{Ex: Tent map}
The existence of analytically stable augmentations does not hold for arbitrary pairs $(f,\Gamma)$ over arbitrary non-Archimedean fields, even in residue characteristic zero. For example, suppose that $f$ is a Latt\`es map with non-simple reduction. Then the Julia set of $f$ is an interval in $\Berk$ and $f$ acts on the Julia set by a tent map. (See \cite[\S5.1]{Favre_Rivera-Letelier_Ergodic_2010}.) Let $\zeta$ be a Julia point with infinite orbit in the Julia set. By passing to an algebraically closed and complete extension of $\LL$, we may take $\zeta$ to be a type~II point. 
	
Let $\Gamma$ be any vertex set containing $\zeta$.  Every $\Gamma$-domain $U$ that meets the Julia set will be a $J$-domain (Proposition~\ref{Prop: Fatou F-ensemble}).  As the orbit of $\zeta$ is infinite, it must intersect a $J$-domain.  Hence $(f, \Gamma)$ cannot be analytically stable.

\subsection{The computations of \cite{DeMarco_Boundary_Maps_2005}}
\label{Ex: I(d)}
	The results in the present paper may be viewed as a natural generalization of \cite{DeMarco_Boundary_Maps_2005}.  The space of rational maps of degree $d\geq 2$ on the Riemann sphere $\Chat$ sits inside $\Ratbar_d = \P^{2d+1}$, the space of all pairs $(F,G)$ where $F,G \in \CC[X,Y]$ are homogeneous of degree~$d$.  We set $H = \gcd(F,G)$, so that $(F,G) = H\cdot\phi$, where $\phi$ describes an endomorphism of $\Chat$.   The iteration map $f\mapsto f^n$, from $\Ratbar_d$ to $\Ratbar_{d^n}$, is intedeterminate along a subvariety  $I(d) \subset \del\Rat_d$, independent of $n\geq 2$; the set $I(d)$ consists of all elements $H\cdot \phi$ for which $\phi$ is constant and its value is a root of the polynomial $H$. 

Let $\mu_f$ denote the measure of maximal entropy for the rational function $f: \Chat\to\Chat$.  The main result of \cite{DeMarco_Boundary_Maps_2005} states that the map of measures, $f\mapsto \mu_f$, extends continuously from $\Rat_d$ to a boundary point $H\cdot\phi \in \del\Rat_d$ if and only if $H\cdot \phi$ lies outside of $I(d)$.

\begin{prop}
Let $f_t$ be a degenerating 1-parameter family of rational functions of degree $d \geq 2$, and write $f$ for the associated Berkovich dynamical system. Then the limit $f_0$ lies outside $I(d)$ if and only if the pair $(f,\{\zeta_g\})$ is analytically stable.  
\end{prop}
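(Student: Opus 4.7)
The plan is to invoke Lemma~\ref{Lem: Dynamic compatibility} applied to the \emph{trivial} model $X = \hat\CC \times \DD_\varepsilon$ of $\PP^1_\ell$, whose associated vertex set in $\Berk_\LL$ is precisely $\{\zeta_g\}$ by Proposition~\ref{Prop: Vertex sets and models}. The central fiber $X_0$ is a copy of $\PP^1_\CC$; its generic point $\eta$ corresponds under $\red_X^{-1}$ to $\zeta_g$, and each closed point $c \in X_0$ corresponds to the open Berkovich disk $D_c$ at $\zeta_g$ with residue direction $c$. Analytic stability of $(f,\{\zeta_g\})$ amounts to the condition that $f(\zeta_g) = \zeta_g$ or that $f(\zeta_g)$ lies in an $F$-disk $D_c$ for $\Gamma = \{\zeta_g\}$.

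The rational map $F:X\dashrightarrow X$ given by $(t,z)\mapsto(t,f_t(z))$ restricts on $X_0$ to the meromorphic limit $f_0 = H\cdot\phi$, whose indeterminacy locus is exactly the set of roots of $H$. I would then split according to the structure of $f_0$. If $\phi$ is nonconstant, then $F$ is dominant on $X_0$, so $F(\eta) = \eta$; by Lemma~\ref{Lem: Dynamic compatibility} this translates to $f(\zeta_g) = \zeta_g$, so $(f,\{\zeta_g\})$ is analytically stable, and simultaneously $f_0 \notin I(d)$ since $I(d)$ requires $\phi$ to be constant. If instead $\phi \equiv c$ is constant, then $F$ collapses the generic point of $X_0$ to the closed point $c$, so $F(\eta) = c$; the lemma then yields $f(\zeta_g) \in D_c$. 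Stability now reduces to determining whether $D_c$ is an $F$-disk.

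For the last step I would apply the lemma a second time to analyze the orbit of $D_c$. When $c$ is not a root of $H$, the map $F$ is regular at $c$ and $F(c) = \phi(c) = c$, so the lemma gives $f(D_c) \subset D_c$; iterating, $D_c$ is forward-invariant, hence an $F$-disk, stability holds, and $f_0 \notin I(d)$ by definition. When $c$ is a root of $H$, the map $F$ is indeterminate at $c$, so the final assertion of the lemma yields $\zeta_g \in f(D_c)$; thus $D_c$ is a $J$-disk, stability fails, and $f_0 \in I(d)$. Assembling the three cases produces the desired biconditional. I expect no substantive obstacle: the real content is packaged into Lemma~\ref{Lem: Dynamic compatibility}, and the remaining argument is a bookkeeping case analysis on whether $\phi$ is constant and whether its value lies in the zero locus of $H$.
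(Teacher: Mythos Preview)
Your proof is correct and follows the same three-case analysis as the paper: $\phi$ nonconstant; $\phi\equiv c$ with $c$ not a root of $H$; $\phi\equiv c$ with $c$ a root of $H$. The only difference is packaging: the paper argues directly from Berkovich reduction theory (the roots of $H$ are the residue disks carrying preimages of $\zeta_g$, the constant value of $\phi$ names the disk containing $f(\zeta_g)$), whereas you route the same facts through Lemma~\ref{Lem: Dynamic compatibility} applied to the trivial model. This is a presentational choice rather than a genuinely different argument; your version has the virtue of making explicit that this proposition is the $\Gamma=\{\zeta_g\}$ instance of the general model/vertex-set dictionary, while the paper's version is more self-contained and avoids invoking the Section~\ref{Sec: Resolutions of surfaces} machinery.
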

	
\noindent
The formulas in \cite{DeMarco_Boundary_Maps_2005} for the mass of the limit measure $\mu_0 = \lim_{t\to 0} \mu_{f_t}$ may be deduced from our Markov chain description in Theorem~C.

\begin{proof}
Write $f_0 = H\cdot\phi \in \del \Rat_d$.  Then $f(\zeta_g) = \zeta_g$ on $\Berk$ if and only if the reduction map (given by the limit function $\phi$) is nonconstant.  If $\phi$ is constant, then the $d$ roots of $H$ coincide with the $d$ disks in $\Berk$ containing the preimages of $\zeta_g$, counted with multiplicities.  (Compare \cite[\S3]{Faber_Berk_RamI_2013}.)  The constant value of $\phi$ coincides with the Berkovich disk $U$ containing the image $f(\zeta_g)$.  Consequently, $f_0 \in I(d)$ if and only if $U$ contains a preimage of $\zeta_g$, and $(f, \{\zeta_g\})$ fails to be analytically stable.  On the other hand, if $\phi$ is constant but $f_0 \not\in I(d)$, then $f(U) \subset U$, so that $U$ is an $F$-disk and $(f,\{\zeta_g\})$ is analytically stable.  
\end{proof}


\appendix
\section{Rivera Domains, by Jan Kiwi}
\label{Sec: Appendix}


\medskip

Let $k$ be a characteristic zero  algebraically closed and complete non-Archimedean field with respect to a nontrivial absolute value $| \cdot |$. 
Denote by $\pberl$ the Berkovich projective line over $k$. Throughout this appendix Berkovich type I points will be also called {\bf rigid points}. 

Let  $f : \pberl \to \pberl$ be a rational map of degree at least $2$.  By definition, a point $\zeta \in \pberl$ belongs to the Julia set $\julia(f)$ if for every neighborhood $U$ of $\zeta$, the union of images $\bigcup_n f^n(U)$ omits at most finitely many points of $\pberl$.  The Fatou set is the complement of the Julia set.  
A connected component of the Fatou set will be simply called a {\bf Fatou component}.
Each Fatou component maps onto a Fatou component under $f$. 
A Fatou component $U$ is called {\bf fixed} if $f(U)=U$.
An attracting type I  fixed point $x_0$ (i.e.,  a fixed point $x_0$ such that $| f'(x_0)| < 1)$ belongs to the Fatou set. 
The component $U$ that contains $x_0$ is  fixed and it is called the {\bf immediate basin} of $x_0$.

The {\bf ramification locus of $f$} is formed by all $x \in \pberl$ such that the local degree $m_f(x)$ satisfies $m_f(x) \ge 2$.
We say that $f: \pberl \to \pberl$ is {\bf tame} if its ramification locus is contained in the convex hull of the rigid critical points of $f$.

In his doctoral thesis Rivera-Letelier \cite[Th\'eor\`eme 3]{Rivera-Letelier_Asterisque_2003} classified periodic Fatou components of rational maps for $k= \C_p$. The aim of this appendix is to provide a proof of this classification for the case of tame rational maps.

\begin{thm} [Rivera-Letelier]
  \label{thr:1}
    Let $f: \pberl \to \pberl$ be a tame rational map of degree $d \ge 2$. If $U$ is a fixed Fatou component, then exactly one of the following holds:
  \begin{enumerate}
  \item $U$ is the immediate basin of attraction of a type I fixed point. 
  \item The map $f: U \to U$ is a bijection and  $\partial U$ is a union of
at most $d-1$ type II periodic orbits.
  \end{enumerate}
\end{thm}

The proof relies on the general topological fact that maps such as $f$ must have a fixed point $x_0$ in $U$ (see subsection~\ref{ss:FixedPoint}). When the fixed point is an attracting rigid (type I) point we are in case (1). Otherwise, the local dynamics at non-rigid Fatou fixed points (see subsection~\ref{ss:FatouPeriodicPoints}) allows us to spread the periodic behavior from $x_0$ to the directions containing boundary points of $U$. In order to be able to reach the boundary of $U$ with this periodic behavior we employ some properties of injective maps (see subsection~\ref{ss:InjectiveMaps}). Finally, a counting argument establishes that $\partial U$ is finite and we conclude that $f:U \to U$ is a bijection.

To ease notation, in the sequel we drop the subscript $k$.
We identify an element  $\vec{v}$ in $T \pber_x$ with the corresponding open disk
$D=D(\vec{v})$ defining $\vec{v}$. 
Thus, we abuse notation and regard simultaneously 
$D$ as an element of  $T \pber_x$ and as a subset of $\pber$.
For short, we say that $D$ is a {\bf direction} at $x$.
Also, we let $\H = \pber \setminus \pone(k)$.

\subsection{Periodic points}
\label{ss:FatouPeriodicPoints}
Theorem~2.1 in~\cite{Kiwi_Quadratic_Puiseux_2014} says:

\begin{prop}
  \label{pro:1}
  Let $g$ be a rational map over $k$ of degree at least $2$, and let $x \in \H$ be a fixed point.
The fixed point $x$ belongs to the Julia set $\julia(g)$ if and only if one of the following occurs:
\begin{enumerate}
\item $m_g(x) \ge 2$ where $m_g(x)$ denotes the local degree of $g$ at $x$.

\item There exists a direction $D$ at $x$ with infinite forward orbit under $T_x g : T \pber_x  \to T \pber_x $ such that
$g(D) = \pone$. 
\end{enumerate}
\end{prop}

\begin{cor}
  \label{cor:1}
  If $x \in \H$ is a Fatou fixed point of $g$ and $D$ is a direction at $x$ with infinite forward orbit under $T_x g$, then 
$D$ is contained in the Fatou set of $g$.
\end{cor}

\begin{proof}
From Proposition~\ref{pro:1},  it follows that at a Fatou fixed point $x$ every direction $D$ with infinite forward orbit under $T_xg$ 
has zero surplus multiplicity, that is, $g(D)$ is a direction at $x$. 
In particular, $g^n(D)$ omits $T_xg^{-1}(D)$ for all $n \ge 0$. Thus, $D$ is contained in the Fatou set.
\end{proof}

\subsection{Injective maps}
\label{ss:InjectiveMaps}

For convenience we identify $\pone = \pone(k)$ with
$k \cup \{ \infty \}$ via the map $[z:1] \mapsto z$. 
As usual we denote by $\tilde{k}$ the residue field of $k$ (i.e. the ring of integers $\mathfrak{O}=\{ z \in k : |z|\leq 1\}$ modulo its maximal ideal $\mathfrak{M}=\{ z \in k : |z| < 1\}$).

The {\bf diameter} $\diam B$ of a Berkovich closed ball $B \subset \pber \setminus \{\infty\}$   
is by definition the diameter of $B \cap k$ with respect to the metric in $k$ induced by $| \cdot |$. For all $x \in \pber$ such that $x \neq \infty$
we set
$$ \diam x := \inf  \{ \diam B \ : \  x \in B, B \text{ closed} \}.$$

Recall that for $z \in k$ the Berkovich open (resp. closed) ball 
of diameter $r \in |k^\times|$ containing $z$ is denoted by $\mathcal{D}(z,r)^-$ (resp. $\mathcal{D}(z,r)$). The $\sup$ norm on $\{w \in k \ : \ |w-z|\le r \}$ is the unique boundary point of these Berkovich balls.

\begin{lem}[Rivera's approximation Lemma]
  \label{approximation}
Let $\z_g \in \pber$ denote the Gauss point
and $D= \mathcal{D}(0,1)^-$ the Berkovich open unit ball containing the origin.
Consider a rational map $g$ that fixes the Gauss point $\z_g$ and
such that $T_{\z_g} g (D) =D$.
Assume that for a closed ball $B$ contained in $D$ we have that $g: D \setminus B \to D$ is injective. 
Then there exists an injective analytic map $h : D \to D$ such that
$h(x) = g(x)$ for all $x$ with $\diam x \ge \diam B$. 
\end{lem}

\begin{proof}
Lemme d'Approximation in Section 5 of~\cite{Rivera-Letelier_Asterisque_2003}.
\end{proof}

\begin{lem}[Constant tangent map]
  \label{lem:2}
  Let $D$ be the Berkovich open unit ball containing the origin and
let $\frak{M} = D \cap k$ denote the maximal ideal. 

Let $g : D \to D$ be a bijective analytic map.
For all  rigid points $z_0$ and $\rho$  in $D$ we have that
$$g(z_0 + \rho z + \rho \frak{M}) = g(z_0) +  g'(0) \cdot (\rho z) + \rho \frak{M},$$
for all $|z| \le 1$.
\end{lem}

In other words, for \emph{all} $x \in D$ with diameter $r= |\rho| <1$,
the tangent map  $T_x g$ is multiplication by $\lambda=\widetilde{g'(0)} \in \tilde{k}$, 
in the ``coordinates'' of $T \pber_x$ and $T\pber_{g(x)}$  determined by the choice of $z_0$ and $\rho$.  These coordinates assign $\tilde{z} \in \tilde{k}$
 to the direction
$z_0 + \rho z + \rho \frak{M}$ at $x$  and  $\tilde{w} \in \tilde{k}$ to the direction
$g(z_0) + g'(0) \, (\rho w) + \rho \frak{M}$ at $g(x)$.

\begin{proof}
  Write the series of $g$:
$$ g(z) = a_0 + a_1 z + a_2 z^2 + \cdots. $$
It follows  that $|a_0| <1, |a_1|=1, |a_j| \leq 1$ for all $j \ge 2$ since $g:D \to D$ is bijective.
For all $|z| \leq 1$, 
$$\rho^{-1} ( g(z_0 + \rho z) - g(z_0) ) = a_1 z + a_2(2 z z_0 + \rho z^2) + \cdots$$
which is congruent    to  $a_1z = g'(0) z$, modulo  $\frak{M}$.
\end{proof}

Lemma~2.14 in~\cite{Rivera-Letelier_Asterisque_2003} in the language of Berkovich spaces reads as follows:
 
\begin{lem}[Injectivity domain]
  \label{lem:3}
  Let $g : \pber \to \pber$ be a rational map.
Let $V$ be a  connected component of 
$$\pber \setminus \{ x \in \pber \ : \   m_g(x) \ge 2 \}.$$
Then $g: V \to g(V)$ is a bijection. 
\end{lem}

\subsection{Fixed point}
\label{ss:FixedPoint}

\begin{lem}
  \label{lem:4}
   Let $f: \pber \to \pber$ be a tame rational map of degree at least $2$. If $U$ is a fixed Fatou component,  then $U$ contains a rigid attracting fixed point or a type II fixed point.
\end{lem}

\begin{proof}
Note that $f: \overline{U} \to \overline{U}$ is a continuous self-map of a compact, Hausdorff acyclic and locally connected tree in the sense of Wallace~\cite{Wallace_Fixed_Point}. Thus it has a fixed point $x_0$.

Assume that $U$ contains no rigid attracting fixed point. We must conclude that 
$U$ contains a type~II fixed point.

If $x_0$ is an indifferent rigid point (i.e., $|f'(0)|=1$), then there exist arbitrarily small rigid closed balls around it which are fixed under $f$. 
The associated Berkovich type II points are fixed and we may choose one 
of these points in $U$.

If $x_0$ is a repelling rigid point, then let $V$ be a small Berkovich open  ball about $x_0$ such that
$f(V)$ compactly contains $V$. Denote by $x_V$ the boundary point of $V$.
Consider the continuous map $F: \overline{U} \setminus V
\to \overline{U} \setminus V$ defined by $F(x) =f(x)$ if $f(x) \notin f(V)$ and 
$F(x)=f(x_V)$ otherwise. It follows that $F$ has a fixed point which is not in $V$ and
hence it is a fixed point of $f$. 

Since there are at most finitely many rigid fixed points, we may now  assume that  there exists $x_0 \in \partial U \cap \H$ which is a fixed point of $f$.
(Although not needed below, we observe that, from Proposition~\ref{pro:1}, the fixed point $x_0 \in \julia(f)$ is of type II.)
It follows that the direction of $U$ at $x_0$ is fixed  under $T_{x_0} f$.
If the degree in this direction is $1$, then, as in the rigid indifferent case, there exists a type II fixed point in that direction. 
If the degree in that direction is $\ge 2$ we proceed as in the rigid repelling case and remove a small Berkovich open ball. Since the boundary  of an
 open and connected set $U$ has at most finitely many intersections with the ramification locus of $f$ (points of degree at least $2$ in $\pber$), after finitely many
removals we obtain a type II fixed point in $U$. 
\end{proof}

\subsection{Proof of Theorem~\ref{thr:1}}
We work under the hypothesis of Theorem~\ref{thr:1} and 
 assume that $U$ is not the immediate basin of attraction of a rigid fixed point.
We must prove that (2) in the statement of the theorem holds.

By Lemma~\ref{lem:4}, we may assume that the Gauss point $\z_g$ is a fixed point 
which lies in $U$. From Proposition~\ref{pro:1} 
the local degree of $f$ at $\z_g$ is $1$ (i.e., $m_f(\z_g)=1$).

\begin{lem}
  \label{lem:5}
  Let  $U$ be a fixed Fatou component and assume that $\z_g \in U$ is a fixed point. For all $y_0 \in \partial U$, 
  the arc $[y_0,\z_g]$ joining $y_0 \in \partial U$ and $\z_g$ is periodic.
That is, there exists $m \ge 1$ such that $f^m (y) =y$ for all $y \in [y_0,\z_g]$.
\end{lem}

\begin{proof}
Changing coordinates we may assume that $y_0$ lies in the Berkovich open unit ball 
$D$ containing the origin.
By Corollary~\ref{cor:1} and passing to an iterate of $f$, 
we may suppose that the direction $D$ 
is fixed under $T_{\z_g} f$. 

Let $$ r = \inf \{ \diam y \ : \  y \in [y_0,\z_g],  [y,\z_g] \mbox{ is periodic under } f \}.$$
The lemma follows from continuity of $f$ once we show that $r = \diam y_0$ 
and that the period of $y \in ]y_0,\z_g]$ has an uniform upper bound $m$. 

Let $y_0' \in [y_0,\z_g]$ be such that $\diam y_0'=r$.  
Note that for all $y \in ]y'_0, \z_g]$ we have that $[y,\z_g]$ is a periodic interval, say of period $p$. That is, 
$f^p: [y,\z_g] \to [y,\z_g]$ is the identity. Moreover, $m_{f^p} (z) =1$ for all $z \in [y,\z_g]$, for otherwise $z$ would be a Julia periodic point, by
Proposition~\ref{pro:1}.

Let $\Gamma$ be the convex hull of the set obtained as the union of 
the ramification locus of $f$
and the Gauss point $\z_g$.
By tameness, $\Gamma$ is contained in the the convex hull of the set formed by the rigid critical points of $f$ and $\z_g$.
Consider $R$ such that $r< R <1$ and  no vertex of $\Gamma$ has diameter in $]r,R]$.

Let $x_0' \in [y_0', \z_g]$ be the  point of diameter $R$. Observe that $x_0'$ is periodic under $f$, say of period $p_0'$. 
By Corollary~\ref{cor:1}, the direction of $y_0'$ at $x_0'$ is periodic under $T_{x_0'}f^{p'_0}$ since it contains $y_0 \in \julia(f)$.  Without loss of generality we assume
that this direction is fixed under $T_{x_0'}f^{p'_0}$. 
Since $f^{p'_0}:  [x_0', \z_g] \to [x_0', \z_g] $ is the identity, the direction of $\infty$ is also fixed. Thus, in an appropriate coordinate of $T \pber_{x_0'} \equiv \tilde{k} \cup \{ \infty \}$
 $T_{x_0'}f^{p'_0}$ is $z \mapsto \lambda z$ for some $\lambda \in \tilde{k}$ where $0$ corresponds to the direction $D_0$ of $y_0'$ at $x_0'$ and $\infty$ corresponds to the direction of $\z_g$.

Let $x'_n = f^n(x'_0)$ and  $D_n =T_{x_0'}f^{n} (D_0) \in T_{x_n'} \pber$, subscripts $\pmod{p_0'}$. Note that $[x_n',\z_g]$ maps isometrically onto  $[x_{n+1}',\z_g]$ since it is a periodic interval. Thus, $D_n$ is not the direction of $\infty$ and $\diam x'_n = R$ for all $n$. 
If $\Gamma \cap D_n \neq \emptyset$, then there exists a unique element $w$ of  $\Gamma \cap D_n$ of diameter $r'$ for all $r \le r' \le R$ (by the choice of $R$).
Let $B_n$ be the closed (possibly degenerate) ball contained in $D_n$ with boundary point being the unique point in $\Gamma \cap D_n$ of diameter $r$. If $\Gamma \cap D_n = \emptyset$, then let $B_n$ be any 
ball of diameter $r$ in $D_n$.  
By the choice of $R$ the ramification locus is either disjoint from $D_n \setminus B_n$ 
or its intersection with $D_n \setminus B_n$ is the open interval joining the  boundary point of $ B_n$ with  $x'_n$.
The latter is impossible since the multiplicity at $x_n'$ is $1$. By Lemma~\ref{lem:3},
we conclude that  $f: D_n \setminus B_n \to D_{n+1}$ is injective.
By Rivera's approximation Lemma~\ref{approximation}, there exists an analytic bijection
$g_n : D_n \to D_{n+1}$ which agrees with $f$ for all $y \in D_n$ such that $\diam y \ge r$. 

Consider any $y \in ]y_0' , x_0']$. Since $[y,\z_g]$ is a periodic interval, $\diam f^n (y) = \diam y > r$ for all $n$. 
In particular, $f^n(y) \in D_n \setminus B_n$ for all $n$ $\pmod{p_0'}$. 
Hence $y$ is also periodic under $G=g_{p_0'-1} \circ \cdots \circ g_0$. 

The direction of $y_0'$ at $x_0'$ is fixed under $G$. Thus,  $[y,x_0'] \cap G([y,x_0'])= [x_1, x_0']$ where $x_1$ is fixed under 
$G$. If $x_1 \neq y$, then $T_{x_1} G$ has finite order $>1$, since $y$ is contained in a periodic direction which is not fixed and $\z_g$ is in a fixed direction.
Say that the order of $T_{x_1} G$ is $q$. From the constant tangent map Lemma~\ref{lem:2}, it follows that $q p_0'$ is the order of $\lambda$.
Now $[y,x_1] \cap G^q ([y,x_1]) = [x_2,x_1]$. It follows that 
$x_2$ is fixed under $G^q$ and $x_2 \neq x_1$. If $x_2 \neq y$, then $T_{x_2} G^q$ has finite order $>1$ which is a contradiction with the fact that $\lambda$ has order $q p_0'$ and the constant tangent map Lemma~\ref{lem:2}. Therefore $x_2 = y$ and the period of $y$ is the order of $\lambda$.
We have shown that for all $y \in ]y_0' , x_0']$, the period of $y$ is bounded above by $p_0'$ if $x'_1 = y$ and  by the order of $\lambda$ otherwise. 
By continuity, it follows that $[y_0',x_0']$ is a periodic interval.

Since $y_0'$ is periodic, from Corollary~\ref{cor:1} we have that 
the direction of $y_0$ at $y_0'$ is periodic. Therefore $y_0 = y_0'$, for otherwise
we may find a periodic sub-interval of $[y_0, y_0']$ including $y_0'$ which contradicts the definition of~$r$. 
\end{proof}

To finish the proof of Theorem~\ref{thr:1} we first show that $U$ has finitely many boundary points, all periodic and all of type II and then proceed to show that
$f:U \to U$ is a bijection.

From the previous lemma we have that every point $y_0 \in \partial U$ is a periodic point in the Julia set. Hence, it is a rigid point or a type II point.
However, $y_0$ is not a rigid point, for otherwise $y_0$ would be a repelling periodic point. From the previous lemma $[y_0,x_0]$  would be 
a periodic interval, which would be incompatible with the repelling nature of $y_0$.
Thus, $y_0$ is a type II point.

Let $\cO = \{y_0, \dots, y_{p-1}\}$ be the orbit of $y_0 \in \partial U$. 
Let $B_0, \dots, B_{p-1}$ be the complement of the direction of $U$ at $y_0, \dots, y_{p-1}$, respectively.
Then, at least one of these balls 
$B_j$ must map onto $\pber$ (otherwise all $B_j$ and therefore $\partial B_j$ would be contained in the Fatou set). 
Denote such a ball by $B(\cO)$.
For distinct periodic orbits $\cO_0, \cO_1$ contained in $\partial U$,
 the corresponding closed balls 
$B(\cO_0)$ and $B(\cO_1)$ are disjoint and each contains a preimage of $U$.
Therefore, $\partial U$ consists of at most $d-1$ type II periodic orbits. 

It remains to show that $f: U \to U$ is a bijection.
By Lemma~\ref{lem:3} it is sufficient to prove that the ramification locus is disjoint from $U$. 
We proceed by contradiction and let $\gamma \subset U$ be a connected component
of the ramification locus. 
The convex hull $\Gamma_U \subset \overline{U}$ of $\partial U$ is fixed pointwise by an iterate of $f$. Passing to this iterate we may
assume that $\Gamma_U$ is pointwise fixed. Thus, $\Gamma_U \cap \gamma = \emptyset$ since $\Gamma_U \setminus \partial U$ consists of Fatou fixed points.
Let $a \in \gamma$ and $b \in \Gamma_U$ be such that $]a,b[ \subset \pber$ is an  arc disjoint from $\gamma \cup \Gamma_U$.
Without loss of generality, replacing $\gamma$ if necessary, we may assume that the multiplicity of $f$ along $]a,b[$ is $1$.
It follows that $f(]a,b]) = ]f(a),b]$ and the direction of $b$ at $a$ maps under   $T_a f$ onto the direction $D_b$ of $b$ at $f(a)$ with multiplicity $1$. 
Since the multiplicity at $a$ is at least $2$ there exists a direction 
$D_a$ at $a$ not containing $b$ which is also mapped under $T_a f$ onto the direction $D_b$ of $b$ at $f(a)$. 
The connected graph $\Gamma_U$ is disjoint from $D_a$ and contained in $D_b$, therefore $D_a \subset U$ which implies that $f(D_a) \subset U$.
But $f(D_a) \supset D_b \supset \Gamma_U \supset \partial U$  which is a contradiction.
This completes the proof of Theorem~\ref{thr:1}.

%


\bibliographystyle{plain}
\bibliography{xander_bib}

\end{document}